\numberwithin{equation}{section}
\numberwithin{figure}{section}
\theoremstyle{plain}
\newtheorem{thm}{\protect\theoremname}[section]
\theoremstyle{plain}
\newtheorem{lem}[thm]{\protect\lemmaname}
\theoremstyle{plain}
\newtheorem{cor}[thm]{\protect\corollaryname}
\theoremstyle{plain}
\newcommand{\cC}{\mathcal{C}}
\newcommand{\cL}{\mathcal{L}}
\newcommand{\R}{\mathbb{R}}
\newcommand{\bT}{\mathbb{T}}
\newcommand{\Z}{\mathbb{Z}}
\newcommand{\E}{\mathbb{E}}
\newcommand{\abs}[1]{\lvert#1\rvert}
\newcommand{\norm}[1]{\lVert#1\rVert}
\newcommand{\tmrsup}[1]{\textsuperscript{#1}}
\newcommand{\tthree}[1]{#1\tmrsup{\resizebox{.9em}{!}{\includegraphics{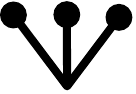}}}}
\newcommand{\ttwo}[1]{#1\,\tmrsup{\!\resizebox{.9em}{!}{\includegraphics{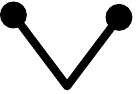}}}}
\newcommand{\para}{\varolessthan}
\newcommand{\arap}{\varogreaterthan}
\newcommand{\reso}{\varodot}
\providecommand{\corollaryname}{Corollary}
\providecommand{\lemmaname}{Lemma}
\providecommand{\theoremname}{Theorem}
\providecommand{\remarkname}{Remark}
\begin{document}
\title{A simple construction of the dynamical $\Phi^4_3$ model}
\author{Aukosh Jagannath}
\address[Aukosh Jagannath]{Department of Statistics and Actuarial Sciences, University of Waterloo}
\email{a.jagannath@uwaterloo.ca}
\author{Nicolas Perkowski}
\address[Nicolas Perkowsi]{Institut f\"ur Mathematik, Freie Universit\"at Berlin}
\email{perkowski@math.fu-berlin.de}

\begin{abstract}
	The $\Phi^4_3$ equation is a singular stochastic PDE with important applications in mathematical physics. Its solution usually requires advanced mathematical theories like regularity structures or paracontrolled distributions, and even local well-posedness is highly nontrivial. Here we propose a multiplicative transformation to reduce the periodic $\Phi^4_3$ equation to a well-posed random PDE. This leads to a simple and elementary proof of global well-posedness, which only relies on Schauder estimates, the maximum principle, and basic estimates for paraproducts, and in particular does not need regularity structures or paracontrolled distributions.
\end{abstract}

\maketitle

\section{Introduction}
In this note we prove the global-in-time existence and uniqueness
of solutions to the $\Phi_{3}^{4}$ equation, which is formally written as
\begin{equation}\label{eq:phi-4-3-formal}
\left(\partial_{t}-\Delta + 1\right)\phi=-\phi^{3}+\xi, \qquad (t,x)\in\R_{+}\times\bT^{3},
\end{equation}
where $\xi$ is a space-time white noise on $\R\times\bT^{3}$ and $\bT^3 = (\R/\Z)^3$ is the three-dimensional torus.
The $\Phi_3^4$ equation is one of the most prominent singular SPDEs and it has at least three important applications: Its invariant measure is given by the $\Phi^4_3$ measure from quantum field theory, and as predicted by the stochastic quantization approach of \cite{Parisi1981} we can use the SPDE to give a dynamic construction of this (highly nontrivial) measure \cite{Gubinelli2021}. The $\Phi^4_3$ equation is also a universal model for $3d$ interface coexistence models near bifurcations \cite{Hairer2018Large, Furlan2017}, and it is expected to describe the dynamics of $3d$ ferromagnets close to their critical temperature \cite{Mourrat2017}.
 
 The rigorous interpretation and local well-posedness of the $\Phi^4_3$ equation had  been an open problem for a long time, because the solution $\phi(t)$ is, for each $t>0$, a distribution in the space variable and therefore the interpretation of the term $-\phi^3$ is unclear. With the development of new mathematical theories such as regularity structures~\cite{Hairer2014} or paracontrolled distributions~\cite{Gubinelli2015Paracontrolled, Catellier2018}, for example, the rigorous construction of $-\phi^3$ became possible and this lead to a local existence-uniqueness result.  Because of the nonlinearity of the equation, global existence is more subtle and it was first shown by Mourrat and Weber~\cite{Mourrat2017Dynamic}, who even proved a ``coming down from infinity'' estimate, i.e., a bound for $\phi(t)$ at $t>0$ which is independent of the initial condition. In recent years simpler proofs have been obtained, which even allow to solve the equation on $\R_{+}\times\R^{3}$ instead of $\R_{+}\times\bT^{3}$; see~\cite{Gubinelli2019Global, Moinat2020}. 

All of these results (even local existence) require sophisticated mathematical tools. Here we suggest a sort of inverse Cole-Hopf transform, previously employed in the context of linear equations for example in~\cite{Hairer2015Simple, Gubinelli2017KPZ}, which leads to a short and relatively elementary proof of global well-posedness of the periodic $\Phi^4_3$ equation. In particular, we do not need paracontrolled distributions or regularity structures. 

In the next section we introduce the transform and we prove the local well-posedness of the transformed equation. Section~\ref{sec:global} contains the proof of global well-posedness, which is similar to the arguments by Gubinelli and Hofmanov\'a for the $\Phi^4_2$ equation~\cite{Gubinelli2019Global}; roughly speaking, the transform reduces the singularity of the equation and brings $\Phi^4_3$ to the same level of difficulty as the much easier $\Phi^4_2$ equation. In Section~\ref{sec:extensions} we discuss possible extensions of our results.

\paragraph{\textit{Notation:}} In the following, for two functions we say that
$f\lesssim_{a}g$ if there is a constant $C(a)>0$ depending only
on $a$ such that $f\leq C(a)g$. We say that $f\simeq g$ if $f\lesssim g$
and $g\lesssim f$. 

\section{Local Solutions}\label{sec:local}
We begin by proving the local well-posedness of the $\Phi_3^4$ equation. 
Recall that the $\Phi_3^4$ equation is to be understood in a renormalized sense.
For this reasons, in place of \eqref{eq:phi-4-3-formal}, the equation is sometimes written formally as
 $(\partial_t-\Delta+1)\phi = -\phi^3 + \infty\cdot\phi+\xi$,
where the $\infty$ denotes a divergent  counter-term.

More precisely, let $\cL=\partial_{t}-\Delta+1$ and let $Z$ be stationary such that
\[
	\cL Z = \xi,
\]
for a two-sided space-time white noise $\xi$ on $\R_+ \times \bT^3$ and let $\cC^\alpha$ denote
the H\"older-Besov space of regularity $\alpha$. (See \eqref{eq:besov-def} below for a precise definition of this space.)
Stated rigorously, our goal is to understand the sequence of solutions to
\[
\begin{cases}
(\partial_t-\Delta+1)\phi_\delta = -\phi_\delta^3 + (3a_\delta-3b_\delta)\phi_\delta +\xi_\delta, & (t,x)\in \R_+\times \bT^3,\\
\phi_\delta(0,x)= Z_\delta(0)+\phi_0^\sharp,
\end{cases}
\]
where $\phi_0^\sharp\in \cC^{3/2-\epsilon}$ for some sufficiently small $\epsilon>0$, $\xi_\delta$ is a suitable mollification of $\xi$,
and $a_\delta,b_\delta$ are (deterministic) sequences of scalars which diverge for $\delta\to0$; also, $\cL$ is stationary with $\cL Z_\delta = \xi_\delta$.
Our goal is to understand the limit $\phi=\lim_{\delta\to0} \phi_\delta$, which we call the renormalization limit.

Since the renormalization procedure is present at every step of our analysis, we will suppress the dependencies
on $\delta$ for the sake of readability. For example, we will simply write that we seek the solution to 
the equation
\begin{equation}\label{eq:Phi-4-3}
\begin{cases}
\left(\partial_{t}-\Delta + 1\right)\phi=-\phi^{3} + (3a-3b)\phi +\xi, & (t,x)\in\R_{+}\times\bT^{3},\\
\phi(0,x)= Z(0) + \phi^\sharp_0.
\end{cases}
\end{equation}
Here and in the following, we use the convention that whenever we write an equation or inequality, 
the expression is to be understood as holding for all $\delta$, with any unquantified constants appearing 
in bounds being independent of $\delta$.

In order to develop solutions to this equation, we need to understand certain terms built from $Z$
which we will refer to as the stochastic tree terms. To define these terms, we recall the following.
Let $\mathscr{S'}$ denote the space of tempered distributions on
$\mathbb{T}^{3}$, and let $\Delta_{j}$ denote the usual Littlewood-Payley operators \cite{Bahouri2011}. Consider the sum $\Delta_{\leq j}=\sum_{i\leq j}\Delta_{i}$,
and define $\Delta_{\geq j},\Delta_{>j},$ and $\Delta_{<j}$ similarly.
For $u,v\in\mathscr{S}'$, consider the \emph{paraproduct}
\[
u\para v=\sum_{j\geq-1}\Delta_{\leq j-2}u\Delta_{j}v
\]
and the\emph{ resonant product }
\[
u\reso v=\sum_{i,j:\abs{i-j}\leq1}\Delta_{i}u\Delta_{j}v.
\]
We denote their sum by $u\preccurlyeq v=u\para v+u\reso v.$

We consider the following nonlinear terms built from $Z$:  
\begin{gather*}
	\llbracket Z^2  \rrbracket := Z^2 - a, \quad
	\cL \tthree{Z} = \llbracket Z^3 \rrbracket := Z^3 - 3 a Z,\qquad\cL  \ttwo{Z} = \llbracket Z^2 \rrbracket, \\
	\tthree{Z}\reso Z,\qquad \ttwo{Z} \reso \llbracket Z^2 \rrbracket -\frac{b}{3}, \qquad |\nabla \ttwo{Z}|^2 - \frac{b}{3}, \qquad \tthree{Z} \reso \llbracket Z^2 \rrbracket -  bZ,
\end{gather*}
where $\ttwo{Z}$ and $\tthree{Z}$ have initial condition $0$: $\ttwo{Z}(0) = \tthree{Z}(0) = 0$.
We remind the reader that these terms all implicitly depend on $\delta$, e.g., the terms $Z,a,b$ should be understood as $Z_\delta,a_\delta,b_\delta$. (For fixed $\delta>0$  the terms $a$ and $b$  in the above expressions are the same as those in \eqref{eq:Phi-4-3}.)

Using Gaussian analysis, it can be shown that each of these terms converges as $\delta\to 0$ to a limit that does not depend on the specific mollification used for $\xi_\delta$.
As the existence of the limits is implicit in any
solution theory to the $\Phi_3^4$ equation, we do not prove this here. Instead, we refer the reader to  \cite{Catellier2018} or, for a pedagogical exposition,  \cite{Mourrat2017Construction}.
 While the term $|\nabla \ttwo{Z}|^2 - \frac{b}{3}$ does not appear in the references, it is (basically) equivalent to $\ttwo{Z} \reso \llbracket Z^2 \rrbracket -\frac{b}{3}$; for the convenience of the reader, we discuss this in the appendix. The regularities of these terms are given in Table~\prettyref{t:reg}, which should be read as $Z_\delta \to Z \in C_T \cC^{-\frac12-\epsilon}$, $\llbracket Z^2_\delta \rrbracket \to \llbracket Z^2 \rrbracket \in C_T \cC^{-1-\epsilon}$, etc., and we can take any $\epsilon > 0$.
 
  We work with the following function spaces:
 \renewcommand{\arraystretch}{1.7}
  \begin{table}
      \begin{center}
  \begin{tabular}{| r | c | c | c | c|c|c|c|c|}
  \hline
  $\tau$ & $Z$ & $ \llbracket Z^2 \rrbracket$ & 
  $\tthree{Z}$ & $\ttwo{Z}$ & $\tthree{Z}\reso Z$  & $\ttwo{Z}\reso \llbracket Z^2 \rrbracket - \frac{b}{3}$ & $|\nabla \ttwo{Z} |^2  - \frac{b}{3}$ & $ \tthree{Z} \reso \llbracket Z^2 \rrbracket - bZ $\\ \hline 
  $\alpha_{\tau}$ & $-\frac{1}{2}-\epsilon$ & $-1-\epsilon$ & 
  $\frac{1}{2}-\epsilon$ & $1-\epsilon$ & $-\epsilon$ & $-\epsilon$ & $-\epsilon$ & $-\frac{1}{2}-\epsilon$ \\
  \hline
  \end{tabular}
  \vspace{10pt}
  \caption{Regularity of stochastic objects.}
  \label{t:reg}
  \end{center}
  \end{table}
\begin{equation}\label{eq:besov-def}
\norm{u}_{\cC^{\alpha}}=\norm{\left(2^{j\alpha}\norm{\Delta_{j}u}_{L^{\infty}}\right)_{j\geq-1}}_{\ell^{\infty}},
\end{equation}
for $\alpha \in \R$ and $C_T \cC^\alpha := C([0,T],\cC^\alpha)$ with $\norm{u}_{C_T\cC^\alpha} := \max_{t \in [0,T]} \norm{ u(t)}_{\cC^\alpha}$.

The usual approach to solving the $\Phi^4_3$ equation starts by considering $v=\phi-Z+\tthree{Z}$. Then we obtain a new equation for $v$, which is better behaved than the equation for $\phi$ because by subtracting $Z - \tthree{Z}$ we removed the most singular terms. However, the equation for $v$ is still ill-posed because it involves the singular product $-3v \llbracket Z^2 \rrbracket$. To deal with this product, previous works used regularity structures or paracontrolled distributions.

Here we propose an alternative approach, by observing that the singular product $-3v \llbracket Z^2 \rrbracket$ can be removed with a multiplicative transform: We consider
\[
v=e^{3\ttwo{Z}}\left(\phi-Z+\tthree{Z} \right).
\]
Then, formally, $\phi$ solves~\eqref{eq:Phi-4-3} if and only if $v$ solves 
\begin{align}
\cL v & =\left(-3\ttwo{Z}+ 9\left(\abs{\nabla\ttwo{Z}}^{2} - \frac{b}{3}\right)\right)v -6(\nabla \ttwo{Z} \cdot\nabla v - b e^{3\ttwo{Z}}\tthree{Z})\label{eq:v-def}\\
 & \quad  + e^{3\ttwo{Z}}\left[3 \tthree{Z} \llbracket Z^2 \rrbracket - 3b\left(Z+\tthree{Z}\right)   - 3Z\left(e^{-3\ttwo{Z}}v-\tthree{Z}\right)^{2} - \left(e^{-3\ttwo{Z}}v-\tthree{Z}\right)^{3} \right],\nonumber 
\end{align}
with initial data $v(0)=\phi(0) - Z(0) = \phi^\sharp_0$. This equation is still ill-posed, because of the singular term $3e^{3\ttwo{Z}} \tthree{Z} \llbracket Z^2 \rrbracket$ on the right hand side. To remove it, we consider
\[
	\cL Y = 3e^{3\ttwo{Z}}( \tthree{Z} \llbracket Z^2 \rrbracket - b(Z + \tthree{Z})),\quad Y(0) = 0.
\]
We show in the appendix that $Y \in \bigcap_{\epsilon>0} C_T\cC^{1-\epsilon}$ can be constructed as a continuous function of the stochastic tree terms described above. Let then $u = v - Y$, which solves
\begin{align}
\cL u & =\left(-3\ttwo{Z}+ 9\left(\abs{\nabla\ttwo{Z}}^{2} - \frac{b}{3}\right)\right)(u+Y) -6\nabla \ttwo{Z} \cdot\nabla u - 6(\nabla \ttwo{Z} \cdot \nabla Y - b e^{3\ttwo{Z}} \tthree{Z})\label{eq:u-def}\\
 & \quad  - e^{3\ttwo{Z}}\left[  3Z\left(e^{-3\ttwo{Z}}(u+Y)-\tthree{Z}\right)^{2} + \left(e^{-3\ttwo{Z}}(u+Y)-\tthree{Z}\right)^{3} \right],\nonumber 
\end{align}
with initial condition $u(0) = v(0) = \phi^\sharp_0$. We show in the appendix that all the explicit (renormalized) nonlinear functions of the stochastic tree terms that appear on the right hand side can be constructed as continuous functions of the given trees. Expanding the nonlinearities, we can then rewrite the equation for $u$ as
\begin{equation}
\cL u=-6\nabla \ttwo{Z} \cdot\nabla u - e^{-6\ttwo{Z}}u^{3}+Z_{2}u^{2}+Z_{1}u+Z_{0},\label{eq:u-def-rewrite}
\end{equation}
for some $Z_{0},Z_{1}$, and $Z_{2}$ that are in $C_{T}\cC^{-1/2-\epsilon}$
for every $T,\epsilon>0$. 

After these preparations, the following local existence/uniqueness result is easy. For the remainder of the paper, we fix $\epsilon > 0$ which is small enough so that all of the following arguments work ($\epsilon < 1/6$ is sufficient).

\begin{thm}\label{thm:local-solutions}
Assume that $u(0) = \phi^\sharp_0\in \cC^{3/2-\epsilon}$. There exists $T^\ast > 0$ such that for all $T< T^\ast$ the equation~\prettyref{eq:u-def} has a unique solution $u \in C_T \cC^{3/2-\epsilon}$, and this solution depends continuously on $\Z = (\ttwo Z, Z_0, Z_1, Z_2) \in \bigcap_{\epsilon'>0} \left(C_T\cC^{-\epsilon'} \times (C_T \cC^{-1/2-\epsilon'})^3\right)$ and $\phi_0^\sharp \in \cC^{3/2-\epsilon}$. Thus, in particular, its limit for $\delta\to 0$ exists and  is independent of the specific mollification used for $\xi_\delta$. Moreover, if $T^\ast < \infty$, then $\lim_{t \uparrow T^\ast} \norm{u(t)}_{\cC^{3/2-\epsilon}} = \infty$.
\end{thm}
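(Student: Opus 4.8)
The plan is to work with the equivalent form \eqref{eq:u-def-rewrite} of \eqref{eq:u-def}, recast it in mild form, and solve it by a Banach fixed point in $C_T\cC^{3/2-\e}$, using only Schauder estimates and Bony's paraproduct and resonant product estimates. Write $P_t=e^{-t(1-\Delta)}$ for the semigroup of $\Delta-1$; it satisfies the Schauder bound $\norm{P_tf}_{\cC^{\alpha+\gamma}}\lesssim t^{-\gamma/2}\norm{f}_{\cC^\alpha}$ for $\gamma\ge0$, and consequently the Duhamel map $g\mapsto\int_0^\cdot P_{\cdot-s}g(s)\,ds$ is bounded from $C_T\cC^\alpha$ to $C_T\cC^{\alpha+\gamma}$ with operator norm $\lesssim T^{1-\gamma/2}$ whenever $\gamma\in[0,2)$. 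I would fix an auxiliary exponent $\e'\in(0,\e)$ and regard the stochastic data at the regularities of Table~\ref{t:reg} with $\e$ replaced by $\e'$; in particular $\nabla\ttwo{Z}\in C_T\cC^{-\e'}$, $e^{\pm6\ttwo{Z}}\in C_T\cC^{1-\e'}$ and $Z_0,Z_1,Z_2\in C_T\cC^{-1/2-\e'}$, the slack $\e-\e'>0$ being exactly what will let the Duhamel step gain slightly less than two derivatives. The fixed-point map is $\cM_T u(t)=P_t\phi_0^\sharp+\int_0^t P_{t-s}F(u)(s)\,ds$ with $F(u)=-6\nabla\ttwo{Z}\cdot\nabla u-e^{-6\ttwo{Z}}u^3+Z_2u^2+Z_1u+Z_0$, and a mild solution of \eqref{eq:u-def-rewrite} on $[0,T]$ is precisely a fixed point of $\cM_T$ in $C_T\cC^{3/2-\e}$.

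The analytic core is that $F$ maps $C_T\cC^{3/2-\e}$ into $C_T\cC^{-1/2-\e'}$ and is locally Lipschitz for that pair of spaces. By Bony's estimates $\nabla\ttwo{Z}\cdot\nabla u\in\cC^{-\e'}$, since $\nabla u\in\cC^{1/2-\e}$ and the resonant product is admissible thanks to $\tfrac12-\e-\e'>0$ --- this is where the smallness of $\e$ (e.g.\ $\e<1/6$) is used; $e^{-6\ttwo{Z}}u^3\in\cC^{1-\e'}$, with $\norm{e^{-6\ttwo{Z}}}_{\cC^{1-\e'}}$ controlled by a composition estimate; and $Z_iu^{2-i}\in\cC^{-1/2-\e'}$ because $-\tfrac12-\e'+\tfrac32-\e>0$. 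Using the identities $u^3-w^3=(u-w)(u^2+uw+w^2)$ and similar ones, one gets for $u,w$ in a ball of radius $R$ bounds of the form $\norm{F(u)}_{C_T\cC^{-1/2-\e'}}\lesssim\cB_T(1+R^3)$ and $\norm{F(u)-F(w)}_{C_T\cC^{-1/2-\e'}}\lesssim\cB_T(1+R^2)\norm{u-w}_{C_T\cC^{3/2-\e}}$, where $\cB_T$ collects the relevant data norms over $[0,T]$. Feeding this into the Duhamel estimate with $\alpha=-\tfrac12-\e'$ and $\alpha+\gamma=\tfrac32-\e$ (so $\gamma=2-\e+\e'<2$, producing a factor $T^{(\e-\e')/2}$) and combining with $\norm{P_\cdot\phi_0^\sharp}_{C_T\cC^{3/2-\e}}\lesssim\norm{\phi_0^\sharp}_{\cC^{3/2-\e}}$ yields $\norm{\cM_T u}_{C_T\cC^{3/2-\e}}\lesssim\norm{\phi_0^\sharp}_{\cC^{3/2-\e}}+T^{(\e-\e')/2}\cB_T(1+R^3)$ together with the matching contraction bound.

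From here existence and uniqueness are routine. I would take $R$ a fixed multiple of $1+\norm{\phi_0^\sharp}_{\cC^{3/2-\e}}$ and choose $T^\ast>0$ small enough --- depending only on $\norm{\phi_0^\sharp}_{\cC^{3/2-\e}}$ and on $\cB_{T^\ast}$ --- so that for every $T<T^\ast$ the map $\cM_T$ sends the ball of radius $R$ in $C_T\cC^{3/2-\e}$ into itself and is a $\tfrac12$-contraction there; Banach's theorem gives a unique solution in that ball. Uniqueness in all of $C_T\cC^{3/2-\e}$ I would obtain either by covering $[0,T]$ by finitely many subintervals on each of which the contraction applies, or directly from a singular (Gronwall--Henry) inequality for $\norm{(u-w)(t)}_{\cC^{3/2-\e}}\lesssim\int_0^t(t-s)^{-1+(\e-\e')/2}\cB_T(1+\norm{u(s)}^2+\norm{w(s)}^2)\norm{(u-w)(s)}_{\cC^{3/2-\e}}\,ds$. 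Continuous dependence on $(\phi_0^\sharp,\ttwo{Z},Z_0,Z_1,Z_2)$ is the usual stability of a fixed point under continuous perturbation of the contraction: writing $F_1(u^1)-F_2(u^2)=(F_1(u^1)-F_1(u^2))+(F_1(u^2)-F_2(u^2))$, the first difference is absorbed by the contraction estimate and the second is Lipschitz in the data, which gives a local Lipschitz bound $\norm{u^1-u^2}_{C_T\cC^{3/2-\e}}\lesssim\norm{\phi_0^{\sharp,1}-\phi_0^{\sharp,2}}_{\cC^{3/2-\e}}+\norm{\Z^1-\Z^2}$; the same estimates show $T^\ast$ is lower semicontinuous in the data, which with the assumed convergence $\Z_\delta\to\Z$ gives convergence of $u_\delta$ (hence $\phi_\delta$) to its renormalization limit on any $[0,T]$ below the lifespan of the limit.

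For the blow-up alternative, suppose $T^\ast<\infty$ but $\norm{u(t)}_{\cC^{3/2-\e}}$ does not tend to $+\infty$ as $t\uparrow T^\ast$, so there are $t_n\uparrow T^\ast$ with $\sup_n\norm{u(t_n)}_{\cC^{3/2-\e}}=:M<\infty$. Since all relevant data norms over $[0,T^\ast+1]$ are finite, the local existence time furnished above starting from the datum $u(t_n)$ at time $t_n$ is bounded below by some $\tau>0$ independent of $n$; by uniqueness $u$ then extends to $[0,t_n+\tau]$, and for $n$ large $t_n+\tau>T^\ast$, contradicting maximality of $T^\ast$. Hence $\norm{u(t)}_{\cC^{3/2-\e}}\to\infty$ as $t\uparrow T^\ast$, which is the claimed statement. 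The one genuinely delicate point in the whole argument is the borderline regularity bookkeeping of the second paragraph --- that $\nabla\ttwo{Z}\reso\nabla u$ is a bona fide product (which forces $\e$ small) and that the Duhamel step gains $2-(\e-\e')<2$ derivatives; granted \eqref{eq:u-def-rewrite} and Table~\ref{t:reg}, everything else is the standard fixed-point machinery.
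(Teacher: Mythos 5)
Your proposal is correct and takes essentially the same route as the paper: recast \eqref{eq:u-def-rewrite} in mild form, show the nonlinearity maps $C_T\cC^{3/2-\epsilon}$ into $C_T\cC^{-1/2-\epsilon'}$ via Bony's estimates, use the Schauder bound for the Duhamel map to gain slightly fewer than two derivatives and pick up a positive power of $T$, and close a Banach fixed point; the paper parameterizes this with $\alpha=-1/2-\epsilon/2$, $\gamma=2-\epsilon/2$ (giving $T^{\epsilon/4}$) rather than your $\epsilon'<\epsilon$ slack (giving $T^{(\epsilon-\epsilon')/2}$), but this is a cosmetic difference. You spell out the blow-up alternative and the continuous-dependence step in more detail than the paper, which merely sketches them; both match standard arguments and are consistent with the paper's intent.
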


Before we get to the proof, recall that for $\alpha+\beta>0$, the map
$(u,v)\mapsto uv$ extends to a bounded linear form on $\cC^{\alpha}\times\cC^{\beta}\to\cC^{\alpha\wedge\beta}$, see~\cite{Bahouri2011}.
In particular, 
\begin{equation}
\norm{uv}_{\cC^{\alpha\wedge\beta}}\leq\norm{u}_{\cC^{\alpha}}\cdot\norm{v}_{\cC^{\beta}}.\label{eq:product-norm}
\end{equation}
Moreover, $\nabla$ is a bounded linear operator from $\cC^\alpha$ to $\cC^{\alpha-1}$ for any $\alpha \in \R$~\cite{Bahouri2011}.

Also, let $(P_t)_{t\ge 0}$ be the semigroup generated by $\cL$. Recall the following estimate for $(P_t)_{t\ge 0}$~\cite{Gubinelli2015Paracontrolled, Gubinelli2019Global}: for
$\gamma\geq0$ and $\alpha\in\R$
\begin{equation}\label{eq:semigroup-estimate}
	\norm{P_{t}w}_{\cC^{\alpha+\gamma}}\lesssim e^{-t} t^{-\gamma/2} \norm{w}_{\cC^{\alpha}} \le t^{-\gamma/2} \norm{w}_{\cC^{\alpha}}, \qquad t \ge 0.
\end{equation}

\begin{proof}
We work with the formulation~\prettyref{eq:u-def-rewrite}. The regularity of $\nabla \ttwo{Z}$ is $\cC^{-\epsilon}$ and therefore $-6\nabla \ttwo{Z} \cdot\nabla u$ is well-defined if $u\in \cC^{\alpha}$ for $\alpha>1+\epsilon$. Similarly, $Z_1, Z_2$ have regularity $\cC^{-1/2-\epsilon/2}$, and therefore the remaining products are well-defined if $u\in \cC^{\alpha}$ for $\alpha>1/2+\epsilon/2$. In that case, the right hand side of~\prettyref{eq:u-def-rewrite} will
be in $\cC^{-1/2-\epsilon/2}$. By Schauder estimates, we then expect
that we can take $\alpha=3/2-\epsilon$, and thus the PDE is locally well-posed.
We make this precise by a standard Picard iteration argument as follows. 

Consider the map 
\[
F(u)(t) = P_t \phi^\sharp_0 + \int_{0}^{t}P_{t-s}\left(-6\nabla \ttwo{Z} \cdot\nabla u - e^{-6\ttwo{Z}}u^{3}+Z_{2}u^{2}+Z_{1}u+Z_{0}\right)(s)ds.
\]
Applying the semigroup estimate~\prettyref{eq:semigroup-estimate} with $\alpha=-1/2-\epsilon/2$ and $\gamma=2-\epsilon/2$, we have for
any $u,v \in C_{T}\cC^{3/2-\epsilon}$, 
\begin{align*}
\norm{&F(v)-F(u)}_{C_{T}C^{3/2-\epsilon}} \\
& \lesssim T^{\epsilon/4}\cdot\norm{-6\nabla \ttwo{Z} \cdot\nabla (v-u) - e^{-6\ttwo{Z}}(v^{3}-u^3)+Z_{2}(v^2-u^{2})+Z_{1}(v-u)}_{C_{T}\cC^{-\frac{1}{2}-\frac{\epsilon}{2}}}\\
 & \lesssim_{\Z} T^{\epsilon/4}(1+\norm{v}_{C_{T}\cC^{3/2-\epsilon}}^2+\norm{u}_{C_{T}\cC^{3/2-\epsilon}}^2) \norm{v-u}_{C_{T}\cC^{3/2-\epsilon}},
\end{align*}
where in the second line we used the product estimate \eqref{eq:product-norm}. Let $M = 2\|\phi^\sharp_0\|_{\cC^{3/2-\epsilon}}$. Then for $T>0$
sufficiently small (depending only on $M$ and $\Z$), the map $F$ is a contraction on the ball of radius $2M$ in
$C_{T}\cC^{3/2-\epsilon}$. The fixed point is then our desired solution on $[0,T]$. By iterating this construction on $[T,T+T']$, for $T'>0$ possibly smaller than $T$, etc., we can extend the solution up to a possibly finite explosion time $T^\ast$.

The continuous dependence on the data is shown with similar arguments: For $\Z, \tilde \Z$ and $\phi_0^\sharp, \tilde \phi_0^\sharp$, we consider the fixed points $u, \tilde u$ of the maps $F_{\Z, \phi_0^\sharp}, F_{\tilde\Z, \tilde \phi^0_\sharp}$ and write $u-\tilde u = F_{\Z, \phi_0^\sharp}(u)- F_{\tilde\Z, \tilde \phi^0_\sharp}(\tilde u)$ to bound $u-\tilde u$ in terms of $\Z - \tilde \Z$ and $\phi_0^\sharp - \tilde \phi_0^\sharp$.
\end{proof}

\section{Global Solutions: The Gubinelli-Hofmanova Approach}\label{sec:global}

The goal of this section is to prove the following theorem:
\begin{thm}
\label{thm:global-existence} Let $u$, $T^\ast$ and $\epsilon$ be as in Theorem~\ref{thm:local-solutions} and let $T < T^\ast \wedge 1$. There exists a constant $C>0$ that depends only on $\phi^\sharp_0$ and $\Z = (\ttwo{Z},Z_{0},Z_{1},Z_{2})$ but not on $T$, such that
\[
\norm{u}_{C_{T}\cC^{3/2-\epsilon}}\le C.
\]
In particular, $T^\ast = \infty$.
\end{thm}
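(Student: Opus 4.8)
The plan is to prove the a priori bound by the Gubinelli--Hofmanov\'a scheme, exploiting that the multiplicative transform has reduced~\eqref{eq:u-def-rewrite} to a semilinear equation with a favourably signed cubic nonlinearity at the level of a continuous (indeed $\cC^{3/2-\epsilon}$) unknown. Everything below is carried out at fixed $\delta>0$, where~\eqref{eq:u-def-rewrite} is a classical parabolic PDE, and every constant is checked to be independent of $\delta$. First I would isolate the genuinely distributional part of the forcing: decomposing each bilinear term in~\eqref{eq:u-def-rewrite} into para- and resonant products, one checks that the only contributions that are not bounded functions are $Z_0$ together with the paraproducts $u\para Z_1$, $u^2\para Z_2$ and $\nabla u\para\nabla\ttwo{Z}$, which inherit the negative regularities of $Z_1$, $Z_2$ and $\nabla\ttwo{Z}$. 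Letting $U$ solve $\cL U=Z_0+u\para Z_1+u^2\para Z_2-6\,\nabla u\para\nabla\ttwo{Z}$ with $U(0)=0$, the Schauder estimate~\eqref{eq:semigroup-estimate} gives $U\in C_T\cC^{3/2-\epsilon}$ with $\norm{U}_{C_T\cC^{3/2-\epsilon}}\lesssim_{\Z}1+\norm{u}_{C_TL^\infty}^{2}+\norm{u}_{C_T\cC^{1+\kappa}}$ for a small fixed $\kappa>0$; the exponential factor $e^{-t}$ in~\eqref{eq:semigroup-estimate} makes the defining Duhamel integral bounded uniformly in $T$, and the intermediate norm is handled by interpolation, $\norm{u}_{\cC^{1+\kappa}}\le\eta\norm{u}_{\cC^{3/2-\epsilon}}+C_\eta\norm{u}_{L^\infty}$. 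The remainder $\rho:=u-U$ then solves the classical equation $\cL\rho=B-e^{-6\ttwo{Z}}(U+\rho)^{3}$ with $\rho(0)=\phi_0^\sharp$, where $B$ gathers the remaining, function-valued terms and satisfies $\norm{B}_{C_TL^\infty}\lesssim_{\Z}\bigl(1+\norm{u}_{C_TL^\infty}\bigr)\norm{u}_{C_T\cC^{3/2-\epsilon}}$.

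Next I would apply the maximum principle to $\rho$. Since $\ttwo{Z}\in C_T\cC^{1-\epsilon}$ is a bounded continuous function on $[0,T]\times\bT^3$, uniformly in $\delta$, the coefficient $e^{-6\ttwo{Z}}$ is bounded below by a positive constant $c_0$ depending only on $\Z$ and $T$. The equation for $\rho$ contains no transport term (the whole of $-6\nabla\ttwo{Z}\cdot\nabla u$ having been redistributed into $U$ and $B$), so expanding the cube and evaluating at a point where $\rho(t,\cdot)$ attains its spatial maximum or minimum, where the Laplacian has the favourable sign, one obtains, whenever $\norm{\rho(t)}_{L^\infty}$ is large relative to $\norm{U(t)}_{L^\infty}$, a differential inequality of the form
\[
\tfrac{d^{+}}{dt}\norm{\rho(t)}_{L^\infty}\le-c_0\bigl(\norm{\rho(t)}_{L^\infty}-\norm{U(t)}_{L^\infty}\bigr)^{3}+C\bigl(1+\norm{U(t)}_{L^\infty}\bigr)^{2}\norm{\rho(t)}_{L^\infty}+\norm{B(t)}_{L^\infty}.
\]
Comparison with the associated cubic ODE yields $\norm{\rho}_{C_TL^\infty}\lesssim\norm{\phi_0^\sharp}_{L^\infty}+\norm{U}_{C_TL^\infty}+1+\norm{B}_{C_TL^\infty}^{1/3}$, so that $\norm{u}_{C_TL^\infty}$ is controlled modulo the $\cC^{3/2-\epsilon}$ norm of $u$ still hidden in $U$ and $B$; it is essential here that the nonlinearity is a negatively signed \emph{cube}, since this is what beats the quadratic term on the right.

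Finally I would close the estimates and iterate. Combining the previous two steps with the Schauder estimate applied directly to~\eqref{eq:u-def-rewrite} — which, exactly as in the proof of Theorem~\ref{thm:local-solutions}, bounds $\norm{u}_{C_T\cC^{3/2-\epsilon}}$ by $\norm{\phi_0^\sharp}_{\cC^{3/2-\epsilon}}$ plus $T^{\epsilon/4}\bigl(1+\norm{u}_{C_TL^\infty}\bigr)^{2}\bigl(1+\norm{u}_{C_T\cC^{3/2-\epsilon}}\bigr)$ up to a constant depending on $\Z$ — gives a coupled system of inequalities for $\norm{u}_{C_TL^\infty}$, $\norm{u}_{C_T\cC^{1+\kappa}}$ and $\norm{u}_{C_T\cC^{3/2-\epsilon}}$. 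Choosing $\eta$ small and then $T=T_0$ small, both depending only on $\phi_0^\sharp$ and $\Z$, a continuity (bootstrap) argument closes this system on $[0,T_0]$ with a bound depending only on $\phi_0^\sharp$ and $\Z$; iterating over consecutive intervals of length $T_0$ covering $[0,T]$, of which there are finitely many because $T\le1$, the final constant, although built up over these finitely many steps, still depends only on $\phi_0^\sharp$ and $\Z$, which proves the stated bound on $[0,T]$. Since this contradicts the blow-up alternative of Theorem~\ref{thm:local-solutions} unless $T^\ast\ge1$, and since the whole argument can be restarted at times $1,2,\dots$ with initial data $u(1),u(2),\dots$, we conclude $T^\ast=\infty$.

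I expect the main obstacle to be this last coupling. The maximum principle controls $\norm{u}_{C_TL^\infty}$ only through the norms of $U$ and $B$, which involve both the quadratic term $Z_2u^2$ and the $\cC^{3/2-\epsilon}$ norm of $u$, while the Schauder bound on $\norm{u}_{C_T\cC^{3/2-\epsilon}}$ in turn needs $\norm{u}_{C_TL^\infty}$; the interpolation parameter $\eta$, the exact way the singular forcing is split into $U$, and the length of the time interval must therefore be tuned in concert so that the cubic damping dominates the quadratic contribution and the loop becomes genuinely self-improving with a bound uniform in $T\le1$. The ``$+1$'' in $\cL=\partial_t-\Delta+1$, which produces the exponential decay in~\eqref{eq:semigroup-estimate}, is what makes the pieces of $U$ bounded uniformly over long times.
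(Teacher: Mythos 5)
Your overall plan — split the singular forcing off as a linear Schauder problem for a piece $U$, use the favourably signed cube and a maximum principle on the remainder $\rho=u-U$, then close by interpolation — is exactly the Gubinelli--Hofmanov\'a scheme that the paper also follows. But there is a genuine gap, and it is not a detail to be smoothed over by tuning $\eta$ and $T_0$: your system of inequalities does not close because of the quadratic paraproduct $u^2\para Z_2$.

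Concretely: you put $u^2\para Z_2$ entirely into the forcing for $U$, which (via the paraproduct bound and Schauder) forces $\norm{U}_{L^\infty}\lesssim 1+\norm{u}_{L^\infty}^2+\dots$. On the other hand, the maximum principle on $\rho$ gives at best $\norm{\rho}_{L^\infty}\lesssim 1+\norm{U}_{L^\infty}+\norm{B}_{L^\infty}^{1/3}$, and $\norm{u}_{L^\infty}\le\norm{U}_{L^\infty}+\norm{\rho}_{L^\infty}$. Writing $M=\norm{u}_{L^\infty}$, $N=\norm{u}_{\cC^{3/2-\epsilon}}$: even after making $T_0$ tiny (so that your Schauder estimate (d) yields $N\lesssim 1+(1+M)^2$) and $\eta$ small, the chain $\norm{U}_{L^\infty}\lesssim 1+M^2$, $\norm{B}_{L^\infty}^{1/3}\lesssim (1+M)$, produces $M\lesssim 1+M^2$, which carries no information. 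Neither the cube root from the maximum principle nor the short-time Schauder gain touches the $M^2$ sitting in $\norm{U}_{L^\infty}$, because your $U$-estimate has no $T$-dependence (it is uniform in $T$ thanks to the $e^{-t}$ factor) and no smallness parameter attached to the $u^2$ contribution. The bootstrap you invoke would require $M^2\lesssim M$ for large $M$, which is false.

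The missing idea is the adaptive Littlewood--Paley frequency cutoff $Z_2=\Delta_{\le 2n}Z_2+\Delta_{>2n}Z_2$, $Z_1=\Delta_{\le n}Z_1+\Delta_{>n}Z_1$, with $n$ chosen \emph{as a function of the (unknown) $L^\infty$ norm of $u$}, namely $2^{n(3/2-2\epsilon)}\simeq\norm{u}_{L^\infty}\vee 1$. Only the high-frequency pieces go with the Schauder part ($U_1$ in the paper's notation): measuring $u^2\para\Delta_{>2n}Z_2$ in a lower-regularity Besov space gains a factor $2^{-2n\delta}$ with $\delta=3/2-2\epsilon$, which by the choice of $n$ exactly cancels the $\norm{u}_{L^\infty}^2$ and yields $\norm{u_1}_{L^\infty}\lesssim_{\Z,\phi^\sharp_0}1$ outright (\prettyref{cor:u_1-longtime-u-2-bound}, case $\alpha=0$). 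The low-frequency pieces $u^2\para\Delta_{\le 2n}Z_2$ are $L^\infty$ functions, but of size $2^{2n\kappa}\norm{u}_{L^\infty}^2\simeq\norm{u}_{L^\infty}^{2+2\kappa/(3/2-2\epsilon)}$, and this exponent, chosen with $\kappa\in(1/2,(3/2-2\epsilon)/2)$, is strictly less than $3$; so once the cube root from the maximum principle is taken, the resulting power of $\norm{u_2}_{L^\infty}$ is strictly below $1$ and is absorbed by Young's inequality. This tuning between the $n$-dependent regularity gain, the permissible $\kappa$, and the cube root is exactly what makes the loop self-improving; without the cutoff it is not. (Your decomposition also forces $U$ with $-6\nabla u\para\nabla\ttwo{Z}$ instead of putting $-6\nabla\ttwo{Z}\arap\nabla$ into the operator for the Schauder piece, but this is a cosmetic difference; the frequency cutoff is the essential missing ingredient.)
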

We remind the reader that here (and in the subsequent) $Z_0,Z_1,Z_2$ are as in \eqref{eq:u-def-rewrite}.

For readability, in the remainder of this section we will suppress the dependence
of norms on time when it is clear from context, and we simply write $\norm{u}_{\alpha}=\norm{u}_{C_{T}\cC^{\alpha}}$,
with the convention that $\norm{u}_{0}=\norm{u}_{C_{T}L^{\infty}}.$

\subsection{A Paraproduct decomposition}

For $n\geq1$ to be determined later, let us consider the coupled system of equations
\begin{align*}
\left(\cL+6\nabla\ttwo{Z}\arap\nabla\right)u_{1} & =U_{1}(u_{1},u_{2})\\
\left(\cL+6\nabla\ttwo{Z}\cdot\nabla\right)u_{2} & =-e^{-6\ttwo{Z}}u_{2}^{3}+U_{2}(u_{1},u_{2}),
\end{align*}
with initial conditions $u_1(0) = u(0) = \phi^\sharp_0$ and $u_2(0) = 0$, where 
\begin{align}
U_{1}(u_{1},u_{2}) & =u^{2}\para\Delta_{>2n}Z_{2}+u\para\Delta_{>n}Z_{1}+Z_{0}\label{eq:U1-def}\\
U_{2}(u_{1},u_{2}) & =-6\nabla u_{1}\succcurlyeq\nabla\ttwo{Z}-e^{-6\ttwo{Z}}(u^{3}-u_{2}^{3})\label{eq:U2-def}\\
 & \quad+u^{2}\para\Delta_{\leq2n}Z_{2}+u\para\Delta_{\leq n}Z_{1}+u^{2}\succcurlyeq Z_{2}+u\succcurlyeq Z_{1},\nonumber 
\end{align}
for $u:=u_1+u_2$. Note here that we have implicitly suppressed the dependence of $u_{1}$
and $u_{2}$ on $n$. The idea is that $U_1$ collects all the singular contributions, while $U_2$ is more regular and in fact in $L^\infty$, so that we can apply the maximum principle to deal with it. Note that $u_1 + u_2$ solves the equation~\eqref{eq:u-def-rewrite}, so by the uniqueness proved in~\prettyref{thm:local-solutions} we have $u_1 + u_2 = u$.

We begin by estimating $U_1$ and $U_2$:
\begin{lem}
\label{lem:U1-estimates}For any $\delta\geq0$, $n\geq1$, $\epsilon'>0$, we have
\begin{align*}
\norm{U_{1}}_{-1/2-\epsilon-\delta} & \lesssim_{\Z}\left(1+2^{-n\delta}\norm{u_{1}+u_{2}}_{ 0}\right)^{2}.\\
\norm{U_{2}}_{ -1/2-\epsilon} & \lesssim_{\Z}\sum_{i=1}^{3}\norm{u_{1}}_{ 0}^{i}\cdot\norm{u_{2}}_{ 0}^{3-i}+\norm{u_{1}}_{1+\epsilon'}+\norm{u_{1}+u_{2}}_{1/2+\epsilon'}\left(1+\norm{u_{1}+u_{2}}_{ 0}\right).
\end{align*}
Furthermore, for any $\kappa > 1/2$, $n\geq1$, $\epsilon'>0$
\begin{align*}
\norm{U_{2}}_{ 0} & \lesssim_{\Z}\sum_{i=1}^{3}\norm{u_{1}}_{ 0}^{i}\cdot\norm{u_{2}}_{ 0}^{3-i}+ \norm{u_{1}}_{ 1+\epsilon'}+\norm{u_{1}+u_{2}}_{1/2+\epsilon'}\left(1+\norm{u_{1}+u_{2}}_{ 0}\right)\\
 & \qquad+ 2^{n\kappa}\norm{u_{1}+u_{2}}_{ 0}+2^{2n\kappa}\norm{u_{1}+u_{2}}_{ 0}^{2}.
\end{align*}
\end{lem}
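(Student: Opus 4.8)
The plan is to estimate each of the paraproduct and resonant-product contributions to $U_1$ and $U_2$ separately, using only the product estimate~\eqref{eq:product-norm}, the standard boundedness of the paraproduct $f\para g\mapsto \cC^{\beta}$ for $f\in L^\infty$, $g\in \cC^\beta$ (so that $\norm{f\para g}_{\cC^\beta}\lesssim \norm f_0 \norm g_{\cC^\beta}$), and the regularities of the stochastic objects from Table~\ref{t:reg}, i.e.\ $Z_0,Z_1,Z_2\in \cC^{-1/2-\epsilon}$ and $\ttwo Z\in \cC^{1-\epsilon}$, hence $\nabla\ttwo Z\in\cC^{-\epsilon}$. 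For $U_1$, the key point is that the low-frequency parts of $Z_1,Z_2$ have been removed, so we only see $\Delta_{>n}Z_1$ and $\Delta_{>2n}Z_2$. Writing $u=u_1+u_2$, the term $u\para\Delta_{>n}Z_1$ is bounded in $\cC^{-1/2-\epsilon-\delta}$ by $\norm u_0 \norm{\Delta_{>n}Z_1}_{\cC^{-1/2-\epsilon-\delta}}$, and the Littlewood--Paley truncation gains a factor $2^{-n\delta}$ since $\norm{\Delta_{>n}Z_1}_{\cC^{-1/2-\epsilon-\delta}}\lesssim 2^{-n\delta}\norm{Z_1}_{\cC^{-1/2-\epsilon}}$; the analogous estimate with $2^{-2n\delta}\le 2^{-n\delta}$ handles $u^2\para\Delta_{>2n}Z_2$, using $\norm{u^2}_0=\norm u_0^2$. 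Adding the harmless $Z_0\in\cC^{-1/2-\epsilon}\subset\cC^{-1/2-\epsilon-\delta}$ and collecting terms gives the claimed bound $(1+2^{-n\delta}\norm{u_1+u_2}_0)^2$ (after absorbing cross terms by Young's inequality into the square).

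For the $\cC^{-1/2-\epsilon}$ bound on $U_2$, I treat the four groups of terms in~\eqref{eq:U2-def} in turn. The term $-6\nabla u_1 \succcurlyeq \nabla\ttwo Z$ is the product (paraproduct plus resonant) of $\nabla u_1\in\cC^{\epsilon'}$, which requires $u_1\in\cC^{1+\epsilon'}$, against $\nabla\ttwo Z\in\cC^{-\epsilon}$; since $\epsilon'>0$ can be taken with $\epsilon'-\epsilon>-1/2-\epsilon$ the product lands in $\cC^{\epsilon'\wedge(-\epsilon)}\subset\cC^{-1/2-\epsilon}$, bounded by $\norm{u_1}_{1+\epsilon'}\norm{\ttwo Z}$. (One must be slightly careful: the "bad" part $\nabla u_1 \para \nabla\ttwo Z$ that would require more regularity has been routed into $U_1$'s structure via the $\arap$ in the $u_1$ equation, so in $U_2$ only $\nabla u_1\arap\nabla\ttwo Z + \nabla u_1\reso\nabla\ttwo Z = \nabla u_1\succcurlyeq\nabla\ttwo Z$ appears, which is exactly the well-defined part.) The cubic difference $e^{-6\ttwo Z}(u^3-u_2^3) = e^{-6\ttwo Z}u_1(u^2+uu_2+u_2^2)$ is a genuine $L^\infty$ function, bounded by $\sum_{i=1}^3\norm{u_1}_0^i\norm{u_2}_0^{3-i}$ after using $|e^{-6\ttwo Z}|$-type bounds — here one uses that $\ttwo Z\in\cC^{1-\epsilon}\subset L^\infty$ and hence $e^{-6\ttwo Z}\in L^\infty$ with a $\Z$-dependent bound, and that $\cC^0$-norms of products are submultiplicative. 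The low-frequency paraproducts $u^2\para\Delta_{\le 2n}Z_2$ and $u\para\Delta_{\le n}Z_1$ are in $\cC^{-1/2-\epsilon}$ with norm $\lesssim \norm{u}_0^2\norm{Z_2}+\norm u_0\norm{Z_1}$ (no $n$-dependence at this level of regularity, since truncating from below only improves estimates in negative H\"older spaces), giving the $\norm{u_1+u_2}_{1/2+\epsilon'}(1+\norm{u_1+u_2}_0)$-type contribution after noting $\norm u_0\le\norm u_{1/2+\epsilon'}$; and finally the resonant products $u^2\reso Z_2$, $u\reso Z_1$ are well-defined provided $u\in\cC^{\beta}$ with $\beta+(-1/2-\epsilon)>0$, i.e.\ $u\in\cC^{1/2+\epsilon'}$ suffices, landing in $\cC^{-1/2-\epsilon}$ with norm $\lesssim \norm u_{1/2+\epsilon'}^2\norm{Z_2}+\norm u_{1/2+\epsilon'}\norm{Z_1}\lesssim \norm{u_1+u_2}_{1/2+\epsilon'}(1+\norm{u_1+u_2}_0)$. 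Summing the four groups yields the stated estimate.

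For the $\cC^0=L^\infty$ bound on $U_2$, everything is as before except the two terms $u^2\reso Z_2$ and $u\reso Z_1$ (and the low-frequency paraproducts), which are not in $L^\infty$ merely from $u\in\cC^{1/2+\epsilon'}$: a negative-regularity distribution need not be bounded. The trick is to split each of these, e.g.\ write $u\reso Z_1 = u\reso\Delta_{\le n}Z_1 + u\reso\Delta_{>n}Z_1$. The low-frequency piece $u\reso\Delta_{\le n}Z_1$ (together with the low-frequency paraproduct $u\para\Delta_{\le n}Z_1$, hence really $u\preccurlyeq\Delta_{\le n}Z_1$) is a product of $u\in L^\infty$ with a function $\Delta_{\le n}Z_1$ that, being band-limited, satisfies the Bernstein estimate $\norm{\Delta_{\le n}Z_1}_{L^\infty}\lesssim 2^{n(1/2+\epsilon+\kappa')}\norm{Z_1}_{\cC^{-1/2-\epsilon}}$ for any $\kappa'>0$, so this piece is in $L^\infty$ with norm $\lesssim 2^{n\kappa}\norm u_0\norm{Z_1}$ for any $\kappa>1/2$ (choosing $\kappa'$ small); likewise $u^2\preccurlyeq\Delta_{\le 2n}Z_2$ contributes $\lesssim 2^{2n\kappa}\norm u_0^2\norm{Z_2}$. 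The high-frequency pieces $u\reso\Delta_{>n}Z_1$ and $u^2\reso\Delta_{>2n}Z_2$ are still just distributions in $\cC^{-1/2-\epsilon}$, \emph{not} in $L^\infty$ — but these are precisely of the same type as the terms already placed in $U_1$, so one either re-routes them (the cleanest bookkeeping is to note that with the splitting, $u\preccurlyeq Z_1 = u\para\Delta_{\le n}Z_1 + u\preccurlyeq\Delta_{>n}Z_1 + u\reso\Delta_{\le n}Z_1$ and the middle term can be absorbed into the $U_1$-type contribution which we do not need to bound in $L^\infty$) or simply observes that in the definition~\eqref{eq:U2-def} as written, combined with the $U_1$ definition~\eqref{eq:U1-def}, all high-frequency resonant products actually sit in $U_1$; what remains in $U_2$ needing an $L^\infty$ bound is exactly the band-limited part. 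I expect the main obstacle to be this bookkeeping — tracking exactly which frequency ranges of $Z_1,Z_2$ land in $U_1$ versus $U_2$ after expanding $u\preccurlyeq Z_i = u\para Z_i + u\reso Z_i$ and the $\Delta_{\le n}/\Delta_{>n}$ splits, and making sure the Bernstein gain $2^{n\kappa}$ with $\kappa>1/2$ exactly covers the $1/2+\epsilon$ loss of regularity while the negative-regularity pieces are consistently deferred to $U_1$ where only a $\cC^{-1/2-\epsilon-\delta}$ bound (not an $L^\infty$ bound) is ever required. The actual inequalities, once the decomposition is fixed, are all immediate applications of~\eqref{eq:product-norm}, the paraproduct estimate, and Bernstein.
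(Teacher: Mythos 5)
Your treatment of $U_1$ and of the $\cC^{-1/2-\epsilon}$ bound on $U_2$ matches the paper. The gap is in the $L^\infty$ bound on $U_2$, and it is a genuine one.

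You claim that $u^2\reso Z_2$ and $u\reso Z_1$ ``are not in $L^{\infty}$ merely from $u\in\cC^{1/2+\epsilon'}$'', and then try to fix this by frequency-splitting and ``rerouting'' the high-frequency resonant pieces to $U_1$. Both the diagnosis and the fix are wrong. The diagnosis is wrong because you have implicitly frozen the regularity of $Z_1,Z_2$ at $\cC^{-1/2-\epsilon}$ with $\epsilon$ the fixed small parameter of the theorem. But the paper states that $Z_0,Z_1,Z_2\in C_T\cC^{-1/2-\epsilon''}$ for \emph{every} $\epsilon''>0$, so in particular $Z_1,Z_2\in C_T\cC^{-1/2-\epsilon'/2}$. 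With $u^2\in\cC^{1/2+\epsilon'}$ the sum of regularities is $\epsilon'/2>0$, and the paraproduct estimates (\prettyref{lem:Paraproduct-Estimates}) then give $u^2\succcurlyeq Z_2\in\cC^{\epsilon'/2}\subset L^\infty$ directly, with norm $\lesssim_\Z\norm{u^2}_{1/2+\epsilon'}$. That is the whole argument in the paper, and no frequency-splitting of the resonant products is needed.

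The proposed rerouting is also invalid on its own terms. First, $U_1$ as defined in \eqref{eq:U1-def} contains only the paraproducts $u^2\para\Delta_{>2n}Z_2$ and $u\para\Delta_{>n}Z_1$; it contains \emph{no} resonant products, so your statement that ``all high-frequency resonant products actually sit in $U_1$'' is simply false. Second, if you instead modify the decomposition so that they do, you are no longer proving \prettyref{lem:U1-estimates} as stated but some variant of it, and you would have to re-derive the coupled system and re-verify that $u_1+u_2$ still solves \eqref{eq:u-def-rewrite}; the proof would not splice into the rest of \prettyref{cor:u_1-longtime-u-2-bound} as written. A small related issue: for the low-frequency paraproducts $u^2\para\Delta_{\le 2n}Z_2$, $u\para\Delta_{\le n}Z_1$, the paper applies \prettyref{eq:block-bound} with $Z_i\in\cC^{-\kappa'}$ for $\kappa'\in(1/2,(1/2+\epsilon)\wedge\kappa)$ and lands in $\cC^{\kappa-\kappa'}\subset L^\infty$ with the factor $2^{2n\kappa}$; your Bernstein bound gives the exponent $2^{n(1/2+\epsilon)}$, so your $\kappa$ must satisfy $\kappa\ge 1/2+\epsilon$ rather than the $\kappa>1/2$ claimed. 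Again this is fixed by the same observation: use $Z_i\in\cC^{-\kappa'}$ with $\kappa'$ as close to $1/2$ as you like, rather than the frozen $\cC^{-1/2-\epsilon}$.
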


Before turning to the proof of this result, we first recall the
following useful estimate on Littlewood-Payley blocks which follows directly from the definition: For any
$\alpha\in\R$, $f \in \cC^\alpha$ and $\delta \ge 0$,
\begin{equation}
\norm{\Delta_{>n}f}_{\cC^{\alpha-\delta}}\lesssim 2^{-n\delta}\norm{f}_{\cC^{\alpha}}\qquad\norm{\Delta_{\leq n}f}_{\cC^{\alpha+\delta}}\lesssim 2^{n\delta}\norm{f}_{\cC^{\alpha}}.\label{eq:block-bound}
\end{equation}
 We will also require the following result, which we will refer to as the paraproduct estimates.
\begin{lem}[Paraproduct Estimates, \cite{Bahouri2011}]
\label{lem:Paraproduct-Estimates} For all $\beta\in\R$, $\alpha<0$,
and $u,v\in\mathscr{S}^{'}$, the paraproduct satisfies 
\begin{align*}
\norm{u\para v}_{\cC^{\beta}} & \lesssim\norm{u}_{L^{\infty}}\norm{v}_{\cC^{\beta}}\\
\norm{u\para v}_{\cC^{\alpha+\beta}} & \lesssim\norm{u}_{\cC^{\alpha}}\norm{v}_{\cC^{\beta}}.
\end{align*}
Furthermore, if $\alpha+\beta>0$, then the resonant product satisfies
\[
\norm{u\reso v}_{\cC^{\alpha+\beta}}\lesssim\norm{u}_{\cC^{\alpha}}\norm{v}_{\cC^{\beta}},
\]
and if $\beta > 0$ also
\[
\norm{u\reso v}_{\cC^{\beta}}\lesssim\norm{u}_{L^\infty}\norm{v}_{\cC^{\beta}}.
\]
\end{lem}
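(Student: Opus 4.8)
The strategy is to prove each bound directly from the definitions of the paraproduct and resonant product together with the characterization~\eqref{eq:block-bound} of Besov blocks and Bernstein-type frequency localization, following the standard arguments in \cite{Bahouri2011}. Throughout, the key structural facts are: (i) in $\Delta_{\leq j-2}u\,\Delta_j v$ the low-frequency factor $\Delta_{\leq j-2}u$ carries no decay but also no growth, (ii) the product $\Delta_{\leq j-2}u\,\Delta_j v$ is spectrally supported in an annulus $\{|\zeta| \simeq 2^j\}$, so that one can identify $\Delta_k(u\para v)$ with a finite sum of $\Delta_k(\Delta_{\leq j-2}u\,\Delta_j v)$ over $j \simeq k$, and (iii) the resonant sum $\sum_{|i-j|\le 1}\Delta_i u\,\Delta_j v$ is spectrally supported in a ball $\{|\zeta| \lesssim 2^j\}$, so $\Delta_k(u\reso v)$ picks up all terms with $i,j \gtrsim k$.

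First I would treat $\norm{u\para v}_{\cC^\beta} \lesssim \norm{u}_{L^\infty}\norm{v}_{\cC^\beta}$: applying $\Delta_k$ and using the annulus support, $\Delta_k(u\para v) = \sum_{|j-k|\le 2}\Delta_k(\Delta_{\leq j-2}u\,\Delta_j v)$ (up to adjusting the finite range), and since $\Delta_k$ is bounded on $L^\infty$ uniformly in $k$ we get $\norm{\Delta_k(u\para v)}_{L^\infty} \lesssim \norm{\Delta_{\leq j-2}u}_{L^\infty}\norm{\Delta_j v}_{L^\infty} \lesssim \norm{u}_{L^\infty} 2^{-j\beta}\norm{v}_{\cC^\beta}$; multiplying by $2^{k\beta}$ and using $2^{k\beta} \simeq 2^{j\beta}$ on the relevant range gives the claim. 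The bound $\norm{u\para v}_{\cC^{\alpha+\beta}} \lesssim \norm{u}_{\cC^\alpha}\norm{v}_{\cC^\beta}$ is identical except one estimates $\norm{\Delta_{\leq j-2}u}_{L^\infty} \lesssim \sum_{i\le j-2}\norm{\Delta_i u}_{L^\infty} \lesssim \sum_{i\le j-2}2^{-i\alpha}\norm{u}_{\cC^\alpha} \lesssim 2^{-j\alpha}\norm{u}_{\cC^\alpha}$, where the geometric sum converges precisely because $\alpha < 0$ — this is where the sign hypothesis on $\alpha$ enters.

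Next I would handle the resonant term $\norm{u\reso v}_{\cC^{\alpha+\beta}} \lesssim \norm{u}_{\cC^\alpha}\norm{v}_{\cC^\beta}$ under $\alpha+\beta>0$. Here $\Delta_k(u\reso v) = \sum_{j \gtrsim k}\sum_{|i-j|\le 1}\Delta_k(\Delta_i u\,\Delta_j v)$ because the summands live in balls of radius $\simeq 2^j$ rather than annuli. Using $\norm{\Delta_k}_{L^\infty\to L^\infty}\lesssim 1$ and the block bounds, $\norm{\Delta_k(u\reso v)}_{L^\infty} \lesssim \sum_{j\gtrsim k}2^{-j\alpha}2^{-j\beta}\norm{u}_{\cC^\alpha}\norm{v}_{\cC^\beta} = \norm{u}_{\cC^\alpha}\norm{v}_{\cC^\beta}\sum_{j\gtrsim k}2^{-j(\alpha+\beta)}$, and the tail geometric series converges and is $\lesssim 2^{-k(\alpha+\beta)}$ precisely because $\alpha+\beta>0$; multiplying by $2^{k(\alpha+\beta)}$ finishes it. For the last estimate $\norm{u\reso v}_{\cC^\beta}\lesssim \norm{u}_{L^\infty}\norm{v}_{\cC^\beta}$ when $\beta>0$, the same computation applies but with $\norm{\Delta_i u}_{L^\infty}\lesssim\norm{u}_{L^\infty}$ in place of $2^{-i\alpha}\norm{u}_{\cC^\alpha}$, so $\norm{\Delta_k(u\reso v)}_{L^\infty}\lesssim\norm{u}_{L^\infty}\norm{v}_{\cC^\beta}\sum_{j\gtrsim k}2^{-j\beta}\lesssim 2^{-k\beta}\norm{u}_{L^\infty}\norm{v}_{\cC^\beta}$, where now the decay $\beta>0$ is what makes the tail sum converge.

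\textbf{Main obstacle.} None of these steps is deep — they are the classical paraproduct estimates — so the only real care needed is bookkeeping: getting the exact index ranges in the spectral-support claims right (e.g.\ that $\Delta_{\leq j-2}u\,\Delta_j v$ has Fourier support in $2^{j-1}\le|\zeta|\le 2^{j+1}$ or a similar annulus, forcing $|j-k|\le N_0$ for a fixed $N_0$ in $\Delta_k$ of the paraproduct), and tracking that all implied constants are independent of $k$. On the torus $\bT^3$ these Fourier-support statements and the uniform boundedness of $\Delta_k$ on $L^\infty$ are standard (cf.\ \cite{Bahouri2011}), and since the statement of the lemma explicitly attributes it to that reference, it would also be legitimate simply to cite it; I have nonetheless sketched the self-contained argument above since it is short.
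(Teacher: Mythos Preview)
Your proposal is correct and follows the standard argument; the paper itself does not supply a proof of this lemma but simply cites \cite{Bahouri2011}, so there is nothing to compare against beyond noting that your sketch is precisely the classical proof one finds in that reference.
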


With these estimates in hand, we may now prove the above lemma. 
\begin{proof}[\textbf{\emph{Proof of \prettyref{lem:U1-estimates}}}]
Let us begin with the first bound, for $U_{1} =u^{2}\para\Delta_{>2n}Z_{2}+u\para\Delta_{>n}Z_{1}+Z_{0}$.
  By \eqref{eq:U1-def} and the
paraproduct estimates (\prettyref{lem:Paraproduct-Estimates}), 
\begin{align*}
\norm{U_{1}}_{-1/2-\epsilon-\delta} & \lesssim\norm{u_{1}+u_{2}}_{ 0}^{2}\cdot \norm{\Delta_{>2n}Z_{2}}_{-1/2-\epsilon-\delta}+\norm{u_{1}+u_{2}}_{ 0}\cdot\norm{\Delta_{>n}Z_{1}}_{-1/2-\epsilon-\delta} +\norm{Z_{0}}_{-1/2-\epsilon-\delta}\\
 & \lesssim_{\Z,\epsilon}(1+2^{-n\delta}\norm{u_{1}+u_{2}}_{ 0})^{2},
\end{align*}
where in the second line we used the first estimate from \prettyref{eq:block-bound}. 

Let us now turn to the bound for $U_2$.
Since $Z_2, Z_1 \in C_T \cC^{-1/2-\epsilon'/2}$, we may apply the paraproduct estimates and the embedding $C_T \cC^{\epsilon'/2}\subset C_T L^\infty$ to find 
\begin{align*}
\norm{u^{2}\succcurlyeq Z_{2}}_{ 0}+\norm{u\succcurlyeq Z_{1}}_{ 0} & \lesssim_{\Z}\norm{u^{2}}_{1/2+\epsilon'}+\norm{u}_{1/2+\epsilon'}\lesssim \norm{u}_{1/2+\epsilon'}\left(\norm{u}_{ 0}+1\right).
\end{align*}
The gradient term is upper bounded by $\norm{\nabla u_{1}\succcurlyeq\nabla\ttwo{Z}}_0 \lesssim_\Z \norm{u_{1}}_{1+\epsilon'}$. The third order term is bounded trivially. Evidently, it remains to
bound the terms with the block sums of the type $\Delta_{\leq k}$.
To this end, we apply~\prettyref{eq:block-bound} with $\kappa' \in (1/2,(1/2+\epsilon)\wedge\kappa)$ and $\delta \ge 0$ with $\delta \neq \kappa'$:
\[
\norm{u^{2}\para\Delta_{\leq 2 n}Z_{2}}_{-\kappa'+\delta}+\norm{u\para\Delta_{\leq n}Z_{1}}_{-\kappa'+\delta}\lesssim_{\Z}2^{2n\delta}\norm{u^{2}}_{0}+2^{n\delta}\norm{u}_{0}.
\]
Choosing $\delta = 0$ and bounding $\norm{u^{2}}_0 \le \norm{u}^2_0$ yields
the first bound (since $-\kappa'>-1/2-\epsilon$). Choosing $\delta= \kappa$ yields the other (since $\kappa-\kappa'>0$). 
\end{proof}

\subsection{Controlling $u_{1}$ and $u_{2}$. }

With this in hand, we now turn to estimating the solutions $u_{1}$
and $u_{2}$ themselves. In particular, we prove the following
 corollary of the preceding lemma.
\begin{cor}
\label{cor:u_1-longtime-u-2-bound}If $n\geq0$ is such that $2^{-n(3/2-2\epsilon)}\left(\norm{u_{1}+u_{2}}_{ 0}\vee1\right)\simeq1,$
then we have for all $\alpha \in [0,3/2-\epsilon]$
\begin{align}
\norm{u_1}_{\alpha} & \lesssim_{\Z, \phi^\sharp_0} 1+ \norm{u_2}_0^{\frac{2\alpha}{3/2-\epsilon}} \label{eq:u1-bound} \\
\norm{u_{2}}_{ 0} & \lesssim_{\Z,\phi^\sharp_0}1+\norm{u_{2}}_{1/2+\epsilon'}^{1/2}\label{eq:u2-infinity-bound}\\
\norm{u_{2}}_{3/2-\epsilon} & \lesssim_{\Z, \phi^\sharp_0}1+\norm{u_{2}}_{ 0}^{3}\label{eq:u2-3/2-bound}.
\end{align}
\end{cor}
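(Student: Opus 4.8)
The plan is to prove the three inequalities in the order~\eqref{eq:u1-bound}, \eqref{eq:u2-3/2-bound}, \eqref{eq:u2-infinity-bound}, using Schauder estimates for the first two and the maximum principle for the third. Throughout I would work at the level of the mollified equations (fixed $\delta>0$), where the stochastic data are smooth and classical parabolic regularity and the classical maximum principle apply; the constants produced depend on the stochastic data only through $\norm{\ttwo Z}_{C_T L^\infty}$ and finitely many of the norms in Table~\ref{t:reg}, hence are uniform in $\delta$. Since $u_1+u_2$ solves~\eqref{eq:u-def-rewrite}, we have $u_1+u_2=u$, so $R:=\norm{u_1+u_2}_0\vee1=\norm{u}_0\vee1$, and the hypothesis reads $2^n\simeq R^{1/(3/2-2\epsilon)}$, equivalently $2^{-n\delta'}R\simeq R^{1-\delta'/(3/2-2\epsilon)}$ for every $\delta'\ge0$.

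For~\eqref{eq:u1-bound} I would use the Schauder estimate for the operator $\cL+6\nabla\ttwo Z\arap\nabla$ — the smoothing analogue of~\eqref{eq:semigroup-estimate}, the reverse-paraproduct perturbation by $\nabla\ttwo Z\in C_T\cC^{-\epsilon}$ being harmless since it costs strictly less than two derivatives; equivalently one keeps $\cL$, treats $6\nabla\ttwo Z\arap\nabla u_1=6\nabla u_1\para\nabla\ttwo Z$ as a source term, and closes the estimate by iterating over short time intervals, using $T\le1$. First, landing in $\cC^{\epsilon/2}\hookrightarrow L^\infty$: applying Lemma~\ref{lem:U1-estimates} with the parameter $\delta$ there close to $3/2$, the factor $2^{-n\delta}R\simeq R^{1-\delta/(3/2-2\epsilon)}\le1$ (negative exponent, as $R\ge1$), so $\norm{U_1}_{-1/2-\epsilon-\delta}\lesssim_\Z1$; this gives $\norm{u_1}_0\lesssim_{\Z,\phi^\sharp_0}1$, and hence $R\lesssim_{\Z,\phi^\sharp_0}1+\norm{u_2}_0$. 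Next, landing in $\cC^{3/2-\epsilon}$ (so $\delta=0$ in Lemma~\ref{lem:U1-estimates}): $\norm{U_1}_{-1/2-\epsilon}\lesssim_\Z(1+R)^2$, whence $\norm{u_1}_{3/2-\epsilon}\lesssim_{\Z,\phi^\sharp_0}1+R^2\lesssim_{\Z,\phi^\sharp_0}1+\norm{u_2}_0^2$. Interpolating $\cC^\alpha$ between $L^\infty$ and $\cC^{3/2-\epsilon}$ then yields~\eqref{eq:u1-bound} for all $\alpha\in[0,3/2-\epsilon]$.

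For~\eqref{eq:u2-3/2-bound} I would apply the Schauder estimate for $\cL+6\nabla\ttwo Z\cdot\nabla$ to the $u_2$-equation, using $u_2(0)=0$ and targeting $\cC^{3/2-\epsilon}$ with a right-hand side of regularity slightly above $-\tfrac12$ (using the $\epsilon$-room in the regularities of $Z_0,Z_1,Z_2$). The cubic term is handled by the embedding $L^\infty\hookrightarrow\cC^{-1/2-\epsilon'}$: $\norm{e^{-6\ttwo Z}u_2^3}_{\cC^{-1/2-\epsilon'}}\lesssim\norm{e^{-6\ttwo Z}u_2^3}_{L^\infty}\lesssim_\Z\norm{u_2}_0^3$. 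For $U_2$ I use the first $U_2$-estimate of Lemma~\ref{lem:U1-estimates} together with~\eqref{eq:u1-bound}; all resulting contributions are $\lesssim_{\Z,\phi^\sharp_0}1$ plus a fixed power of $\norm{u_2}_0$, except the term $\norm{u_1+u_2}_{1/2+\epsilon'}(1+\norm{u_1+u_2}_0)$, which I would split by interpolation, $\norm{u_2}_{1/2+\epsilon'}\lesssim\norm{u_2}_0^{1-\theta}\norm{u_2}_{3/2-\epsilon}^{\theta}$ with $\theta=\tfrac{1/2+\epsilon'}{3/2-\epsilon}$, and then Young's inequality, to arrive at a bound of the form $\eta\norm{u_2}_{3/2-\epsilon}+C_\eta(1+\norm{u_2}_0^{5/2})$. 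Choosing $\eta$ small (depending on the Schauder constant, hence on $\Z$) and absorbing $\eta\norm{u_2}_{3/2-\epsilon}$ into the left-hand side gives $\norm{u_2}_{3/2-\epsilon}\lesssim_{\Z,\phi^\sharp_0}1+\norm{u_2}_0^3$.

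Finally, for~\eqref{eq:u2-infinity-bound} I would use the maximum principle on $(\cL+6\nabla\ttwo Z\cdot\nabla)u_2=-e^{-6\ttwo Z}u_2^3+U_2$ with $u_2(0)=0$. At a space-time maximum of $u_2$ (or of $-u_2$) on $[0,T]\times\bT^3$ the spatial gradient of $u_2$ vanishes, so the singular transport term $6\nabla\ttwo Z\cdot\nabla u_2$ drops out; since also $\partial_t u_2\ge0$, $-\Delta u_2\ge0$ there and $e^{-6\ttwo Z}>0$, comparing with $\norm{U_2}_0$ yields $\norm{u_2}_0^3\lesssim_\Z\norm{U_2}_0$ (this is the step for which the smooth mollified equation is used, so that $u_2$ is $C^{1,2}$ near the extremum). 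I then bound $\norm{U_2}_0$ using the second $U_2$-estimate of Lemma~\ref{lem:U1-estimates} with the free parameter chosen as $\kappa\in(\tfrac12,\tfrac34-\epsilon)$ (nonempty since $\epsilon<1/6$), together with~\eqref{eq:u1-bound} and $R\lesssim_{\Z,\phi^\sharp_0}1+\norm{u_2}_0$: every contribution is then $\lesssim_{\Z,\phi^\sharp_0}1$ plus a power of $\norm{u_2}_0$ strictly less than $3$ — the worst being $2^{2n\kappa}\norm{u_1+u_2}_0^2\lesssim_{\Z,\phi^\sharp_0}(1+\norm{u_2}_0)^{2+2\kappa/(3/2-2\epsilon)}$, whose exponent is $<3$ precisely because $\kappa<\tfrac34-\epsilon$ — together with the single $\norm{u_2}_{1/2+\epsilon'}$-dependent term $\norm{u_1+u_2}_{1/2+\epsilon'}(1+\norm{u_1+u_2}_0)$, which by Young's inequality with exponents $(3,\tfrac32)$ is $\le\eta\norm{u_2}_0^3+C_\eta(1+\norm{u_2}_{1/2+\epsilon'}^{3/2})$. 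Choosing $\eta$ small and absorbing $\eta\norm{u_2}_0^3$ into the left-hand side gives $\norm{u_2}_0^3\lesssim_{\Z,\phi^\sharp_0}1+\norm{u_2}_{1/2+\epsilon'}^{3/2}$, i.e.~\eqref{eq:u2-infinity-bound}. The parts I expect to be most delicate are the bookkeeping of the exponents so that each absorption closes — in particular the constraint $\kappa<\tfrac34-\epsilon$ and the value of $\theta$ — and justifying the Schauder estimates for the two perturbed operators $\cL+6\nabla\ttwo Z\arap\nabla$ and $\cL+6\nabla\ttwo Z\cdot\nabla$ uniformly for $T\le1$ (equivalently, the short-interval iteration); once these are in place, the maximum-principle input is immediate.
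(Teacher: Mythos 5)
Your proof is correct and follows essentially the same route as the paper: Schauder estimates (for the $\arap$- and the full paraproduct operator) plus Lemma~\ref{lem:U1-estimates} for the $u_1$ bound and the $\cC^{3/2-\epsilon}$ bound on $u_2$, the maximum principle plus the $L^\infty$ estimate on $U_2$ for the $u_2$ sup bound, with the same choice of cutoff exponent $\kappa\in(1/2,3/4-\epsilon)$ and the same interpolation/Young absorption pattern. The only differences are cosmetic — you prove \eqref{eq:u2-3/2-bound} before \eqref{eq:u2-infinity-bound} (the two are independent given \eqref{eq:u1-bound}) and your bookkeeping of the intermediate exponent ($5/2$ vs.\ the paper's bound via $\norm{u_2}_{1/2+\epsilon'}^{3/2}$) is slightly looser, but both close.
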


\noindent Before turning
to its proof, let us observe the following Schauder estimate.
\begin{lem}[Schauder estimates]
\label{lem:schauder} Suppose that $\alpha<1$ and $b, c \in C_{T}\cC^{-\alpha}$. For any $f\in C_{T}\cC^{2-\alpha}$
we have 
\begin{equation}
\norm{f}_{2-\alpha}\lesssim_{b,c}\norm{(\cL-b\arap\nabla-c\,\arap)f}_{-\alpha}+\norm{f(0)}_{\cC^{2-\alpha}}\label{eq:Schauder-1}.
\end{equation}
If $b \in C_T\cC^{-\alpha+1}$, then the same estimate holds for all $\alpha<2$. If $\alpha<1/2$, then we also have
\begin{equation}
	\norm{f}_{2-\alpha}\lesssim_{b,c}\norm{(\cL-b\cdot\nabla-c)f}_{-\alpha}+\norm{f(0)}_{\cC^{2-\alpha}}.\label{eq:Schauder-2}
\end{equation}

\end{lem}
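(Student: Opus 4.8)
The plan is to prove the three displayed Schauder estimates in \prettyref{lem:schauder} by reducing each to the standard parabolic Schauder estimate \eqref{eq:semigroup-estimate} for the semigroup $(P_t)$, treating the extra first-order terms $b\arap\nabla$, $c\,\arap$ (respectively $b\cdot\nabla$, $c$) perturbatively via a Picard/Neumann-series argument on a short time interval, and then iterating up to the fixed time $T$ (recall $T\le 1$, so the number of iterations is controlled). For \eqref{eq:Schauder-1}, write $g := (\cL - b\arap\nabla - c\,\arap)f$, so that $\cL f = g + b\arap\nabla f + c\,\arap f$ and hence by Duhamel $f(t) = P_t f(0) + \int_0^t P_{t-s}(g + b\arap\nabla f + c\,\arap f)(s)\,ds$. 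The paraproduct estimates (\prettyref{lem:Paraproduct-Estimates}) give $\norm{b\arap\nabla f}_{\cC^{-\alpha}} \lesssim \norm{b}_{\cC^{-\alpha}}\norm{\nabla f}_{\cC^{1-\alpha}} \lesssim_b \norm{f}_{\cC^{2-\alpha}}$ using that $\para$ with a low-regularity factor on the right loses no regularity in the second argument beyond a shift, and similarly $\norm{c\,\arap f}_{\cC^{-\alpha}} \lesssim_c \norm{f}_{\cC^{-\alpha}} \le \norm{f}_{\cC^{2-\alpha}}$ — here one must be slightly careful: $\arap$ is $\para$ with arguments swapped, i.e.\ $c\arap f = f\para c$, so $\norm{f\para c}_{\cC^{-\alpha}}$ needs $f\in L^\infty$, which is fine since $\alpha<1$ (actually $\alpha<2$) gives $\cC^{2-\alpha}\hookrightarrow L^\infty$; one should double-check the exact indices so the paraproduct estimate applies with $\alpha<0$ as required in \prettyref{lem:Paraproduct-Estimates}. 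Then applying \eqref{eq:semigroup-estimate} with regularity gain $\gamma = 2$ (so $t^{-\gamma/2} = t^{-1}$, not integrable) is too crude; instead one uses $\gamma = 2-\eta$ for small $\eta>0$, pays $t^{-1+\eta/2}$ which is integrable, and obtains on $[0,\tau]$ a bound $\norm{f}_{C_\tau\cC^{2-\alpha}} \lesssim \norm{f(0)}_{\cC^{2-\alpha}} + \norm{g}_{C_\tau\cC^{-\alpha}} + C_b\,\tau^{\eta/2}\norm{f}_{C_\tau\cC^{2-\alpha}}$; choosing $\tau$ small (depending only on $b,c$) absorbs the last term, and iterating over $\lceil T/\tau\rceil \le \lceil 1/\tau\rceil$ intervals yields \eqref{eq:Schauder-1}.

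For the second assertion (the same estimate for $\alpha<2$ when $b\in C_T\cC^{-\alpha+1}$), the only change is in bounding the paraproduct term: now $\norm{b\arap\nabla f}_{\cC^{1-\alpha}} \lesssim \norm{b}_{\cC^{1-\alpha}}\norm{\nabla f}_{\cC^{1-\alpha}} \lesssim_b \norm{f}_{\cC^{2-\alpha}}$, but we only land in $\cC^{1-\alpha}$ rather than $\cC^{-\alpha}$ — which is actually a gain of one derivative, so applying \eqref{eq:semigroup-estimate} with $\gamma = 1-\eta$ gives an integrable singularity $t^{-1/2+\eta/2}$ and the iteration closes as before; the resonant term $c\arap$ and the source $g$ are still handled at regularity $-\alpha$ with $\gamma = 2-\eta$ as above, and the only constraint is $2-\alpha > 0$ for the target space and $1-\alpha < 0$ resp.\ $-\alpha<0$ for the paraproduct lemma, consistent with $\alpha<2$ (for the $b$-term one uses $1-\alpha$ which is negative iff $\alpha>1$, so for $1<\alpha<2$ this is exactly the regime where \prettyref{lem:Paraproduct-Estimates} with negative index applies; for $\alpha\le 1$ the first assertion already covers it). For \eqref{eq:Schauder-2} with the genuine products $b\cdot\nabla f$ and $cf$ in place of the paraproducts, one uses \eqref{eq:product-norm}: since $\alpha<1/2$, $\nabla f\in\cC^{1-\alpha}$ with $1-\alpha>1/2>\alpha$, so $(-\alpha)+(1-\alpha)>0$ and $\norm{b\cdot\nabla f}_{\cC^{-\alpha}}\lesssim_b\norm{f}_{\cC^{2-\alpha}}$; likewise $cf\in\cC^{-\alpha}$ since $f\in\cC^{2-\alpha}$ and $(-\alpha)+(2-\alpha)>0$ (needs $\alpha<1$, implied). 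The same short-time-iteration argument then gives \eqref{eq:Schauder-2}.

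The main obstacle is bookkeeping the regularity indices so that every invocation of \prettyref{lem:Paraproduct-Estimates} is legal (in particular that the ``low-regularity'' argument index is strictly negative, and that the resonant/product terms land in a space with strictly positive sum of exponents), while simultaneously ensuring the semigroup gain $\gamma$ is strictly less than $2$ so that $\int_0^\tau (t-s)^{-\gamma/2}\,ds < \infty$ — there is slack because $f\in C_T\cC^{2-\alpha}$ already has one more derivative than the source, so we can always afford to throw away an $\eta$ of regularity. A secondary subtlety is that \eqref{eq:semigroup-estimate} requires $\gamma\ge 0$ and $\alpha\in\R$ arbitrary, which is fine, but one should note that the constant in the iteration grows like $C^{\lceil 1/\tau\rceil}$; since $\tau$ depends only on $b,c$ (through $\norm{b}_{C_T\cC^{\cdot}}$, $\norm{c}_{C_T\cC^{\cdot}}$) and $T\le 1$, this final constant still depends only on $b,c$, as claimed by the notation $\lesssim_{b,c}$. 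I would present the proof for \eqref{eq:Schauder-1} in full and remark that \eqref{eq:Schauder-2} and the $\alpha<2$ case follow by the identical argument with the indicated modifications to the product/paraproduct bounds.
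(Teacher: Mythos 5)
Your overall strategy — rewrite the operator perturbatively, use Duhamel, bound the lower-order terms via paraproduct estimates, absorb them on a short time interval, iterate — is exactly the paper's strategy, and your index bookkeeping for the three cases is essentially the same as the paper's (in particular, your observations that $c\arap f = f\para c$ needs $f\in L^\infty$, that the $b\in C_T\cC^{-\alpha+1}$ case gains a derivative, and that $\alpha<1/2$ makes the honest products well-defined, all match).

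However, there is a genuine gap in how you reach the \emph{endpoint} regularity $\cC^{2-\alpha}$. You rely solely on the pointwise semigroup bound \eqref{eq:semigroup-estimate}, observe that $\gamma=2$ gives the non-integrable singularity $t^{-1}$, and propose to use $\gamma=2-\eta$ instead; but with $\gamma=2-\eta$ the Duhamel integral lands in $\cC^{-\alpha+(2-\eta)} = \cC^{2-\alpha-\eta}$, not $\cC^{2-\alpha}$. The displayed estimate you write, $\norm{f}_{C_\tau\cC^{2-\alpha}}\lesssim\norm{f(0)}_{\cC^{2-\alpha}}+\norm{g}_{C_\tau\cC^{-\alpha}}+C_b\tau^{\eta/2}\norm{f}_{C_\tau\cC^{2-\alpha}}$, is therefore not justified: the left-hand side should be $\norm{f}_{C_\tau\cC^{2-\alpha-\eta}}$, and no amount of shrinking $\tau$ or iterating closes the $\eta$-gap. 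The naive semigroup bound alone cannot give the endpoint, because $\int_0^t(t-s)^{-1}\,ds=\infty$.

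The paper instead invokes the full parabolic Schauder estimate for $\cL$, valid for $\epsilon\in[0,2]$ \emph{including $\epsilon=0$}: $\norm{f}_{C_\tau\cC^{2-\alpha-\epsilon}}\lesssim\tau^{\epsilon/2}\norm{\cL f}_{C_\tau\cC^{-\alpha}}+\norm{f(0)}_{\cC^{2-\alpha-\epsilon}}$. This is a genuinely stronger fact than \eqref{eq:semigroup-estimate}: it follows from the blockwise decay $\norm{\Delta_jP_tw}_{L^\infty}\lesssim e^{-ct2^{2j}}\norm{\Delta_jw}_{L^\infty}$, since then $\int_0^t e^{-c(t-s)2^{2j}}\,ds\lesssim 2^{-2j}$ uniformly, which yields the full two-derivative gain before summing over $j$. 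The paper's proof first runs the iteration with a small $\epsilon'>0$ (as you do) to obtain $\norm{f}_{C_T\cC^{2-\alpha-\epsilon'}}\lesssim\norm{g}_{C_T\cC^{-\alpha}}+\norm{f(0)}_{\cC^{2-\alpha}}$, and then applies the Schauder estimate \emph{once more with $\epsilon=0$ and $\tau=T$}, using the already-established control of $\cL f$ in $C_T\cC^{-\alpha}$, to upgrade to the endpoint. You need both of these ingredients: replace your use of \eqref{eq:semigroup-estimate} by the endpoint-valid Schauder estimate for $\cL$ (or derive it via the blockwise argument), and add the final bootstrap step that converts the $\cC^{2-\alpha-\epsilon'}$ bound into the $\cC^{2-\alpha}$ bound. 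With those two modifications, your proof is correct and coincides with the paper's.
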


\noindent It will also be helpful to note the following maximum principle. 
\begin{lem}[Maximum principle]
\label{lem:maximum-principle}Let $b\in C_{T}\cC^{-\alpha}$ for
$\alpha<1/2$ and $\Xi,g\in C_{T}C_{b}$ and $f(0,\cdot)\in C_{b}$, where $C_b$ is the space of continuous bounded functions, equipped with the supremum norm.
Then 
\begin{equation}
\norm{f}_{ 0}\lesssim_{b,\Xi}\norm{\left(\cL-b\cdot\nabla f\right)f+e^{\Xi}f^{3}}_{0}^{1/3}+\norm{f(0)}_{L^{\infty}}.\label{eq:max-prince}
\end{equation}
\end{lem}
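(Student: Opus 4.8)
The plan is to prove this by the classical parabolic maximum principle, exploiting that the drift $b\cdot\nabla f$ vanishes at every spatial extremum of $f$, so that the low regularity of $b$ plays no role and the constant in \eqref{eq:max-prince} turns out to depend only on $\norm{\Xi}_0$. Throughout one works at the level of the mollified noise, where $b$, $f$, $\Xi$ and the function $g:=\cL f-b\cdot\nabla f+e^{\Xi}f^3$ on the right-hand side of \eqref{eq:max-prince} are all smooth on $[0,T]\times\bT^3$ (this is the only situation in which the lemma is used); the hypotheses $b\in C_T\cC^{-\alpha}$ and $\Xi,g\in C_TC_b$ merely record the regularity of the limiting objects, and once we see that the constant depends on the data only through $\norm{\Xi}_0$, uniformity in $\delta$ is automatic, since in the application $\Xi$ equals, up to a constant factor, $\ttwo{Z}$, which lies in $C_T\cC^{1-\epsilon}\hookrightarrow C_TL^\infty$ by Table~\ref{t:reg}.

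First I would estimate $M(t):=\max_{x\in\bT^3}f(t,x)$, which is continuous in $t$. For any maximizer $x_t$ of $f(t,\cdot)$ one has $\nabla_xf(t,x_t)=0$ and $\Delta_xf(t,x_t)\le0$ (the torus has no boundary), and a standard argument with difference quotients and the compactness of $\bT^3$ shows that the upper right Dini derivative satisfies $D^+M(t)\le\partial_tf(t,x_t)$ for an appropriate maximizer $x_t$. Substituting into $\partial_tf=\Delta f-f+b\cdot\nabla f-e^{\Xi}f^3+g$, using that $b\cdot\nabla_xf(t,x_t)=0$ — this is exactly where the roughness of $b$ drops out — together with $e^{\Xi}\ge c:=e^{-\norm{\Xi}_0}>0$ and discarding the nonpositive term $-M(t)$, I get, for every $t$ with $M(t)\ge0$,
\[
D^+M(t)\le -c\,M(t)^3+\norm{g}_0 .
\]
Then I would compare with this scalar inequality: with $K:=\norm{f(0)}_{L^\infty}\vee(\norm{g}_0/c)^{1/3}$ one has $M(0)\le K$, $K\ge0$, and $cK^3\ge\norm{g}_0$, so the right-hand side above is $\le0$ whenever $M\ge K$; since a continuous function with nonpositive upper right Dini derivative on an interval is nonincreasing there, inspecting the last time before any putative excursion of $M$ above the level $K$ forces $M(t)\le K$ for all $t\in[0,T]$.

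Finally, applying the same reasoning to $-f$ — which solves an equation of the same form, with $g$ replaced by $-g$ and the same $\Xi$, because $e^{\Xi}f^3=-e^{\Xi}(-f)^3$ — bounds $-\min_xf(t,\cdot)$ by the same $K$, and combining the two one-sided bounds yields
\[
\norm{f}_0\le\norm{f(0)}_{L^\infty}\vee\big(\norm{g}_0/c\big)^{1/3}\le\norm{f(0)}_{L^\infty}+e^{\norm{\Xi}_0/3}\,\norm{g}_0^{1/3},
\]
which is \eqref{eq:max-prince}. I do not expect a genuine obstacle in any of these steps: the only points needing care are the bookkeeping that at the mollified level $f$ is a classical solution so that the pointwise maximum principle applies, and the check that the constant depends on the data only through $\norm{\Xi}_0$ and hence is uniform in $\delta$ — which is really the whole point, since it is this uniformity that lets the transformed equation be handled by such elementary means.
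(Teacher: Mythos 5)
Your argument is correct, and it proves the right statement, but it differs from the paper's in two technical respects that are worth spelling out.

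First, for the smooth case you run a genuine \emph{differential-inequality} version of the maximum principle: you track $M(t)=\max_x f(t,x)$, establish $D^+M(t)\le\partial_t f(t,x_t)$ at an appropriate maximizer (Hamilton's trick), and then compare $M$ with the constant supersolution $K$. The paper instead inspects the global maximum of $f$ over the compact space-time cylinder $[0,T]\times\bT^3$ and does a three-way case analysis on whether the maximal time is $0$, interior, or $T$. Both rest on the same core observation — at a spatial extremum $\nabla f=0$ kills the drift (so the roughness of $b$ is irrelevant) and $\Delta f\le 0$ — but the paper's route is shorter, requiring no Dini-derivative lemma and no comparison argument. Your route is slightly heavier machinery for the fixed-time estimate; it does, however, generalize more readily to a time-dependent barrier and would be the natural starting point for the ``coming down from infinity'' refinement sketched in Section~\ref{sec:extensions}, so it is not wasted effort.

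Second, and more importantly as a matter of what is actually being proved: you discharge the hypothesis $b\in C_T\cC^{-\alpha}$ by observing that the lemma is only ever invoked at the mollified level and that your constant is uniform in $\delta$. That is consistent with the paper's $\delta$-convention and suffices for the application to $u_2$, but it establishes a weaker statement than the lemma claims — you never prove the estimate for the limiting rough $b$ itself. The paper closes this gap explicitly: it invokes the Schauder estimates of Lemma~\ref{lem:schauder} to show that smooth approximants $b_n\to b$, $\Xi_n\to\Xi$, $g_n\to g$ produce solutions $f_n\to f$ locally uniformly, so the bound passes to the limit and holds for $f$ directly. If you want your proof to stand alone as a proof of the lemma as stated (rather than of a $\delta$-uniform surrogate), you should add this approximation-and-stability step; it is short given that the Schauder estimates are already in hand, and it is the honest way to justify the pointwise evaluation of $b\cdot\nabla f$ when $b$ is merely a distribution.
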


The proof of these results follow by standard arguments and they are deferred
to the end of \prettyref{sec:global}.

We will also need the following interpolation estimate: for $f\in\mathscr{S}'$
and any $0\le \alpha\le \beta,$
\begin{equation}
\norm{f}_{\cC^{\alpha}}=\sup_{j}2^{j\alpha}\norm{\Delta_{j}f}_{L^{\infty}}\leq\sup_{j}\left(2^{j\beta}\norm{\Delta_{j}f}_{L^{\infty}}\right)^{\frac{\alpha}{\beta}}\sup_j \norm{\Delta_{j}f}_{L^{\infty}}^{1-\frac{\alpha}{\beta}} \lesssim\norm{f}_{\cC^{\beta}}^{\frac{\alpha}{\beta}}\norm{f}_{L^{\infty}}^{1-\frac{\alpha}{\beta}}.\label{eq:interpolation}
\end{equation}

\begin{proof}[\emph{\textbf{Proof of \prettyref{cor:u_1-longtime-u-2-bound}}}]
Let us begin with the estimate for $u_1$. By the Schauder estimate \eqref{eq:Schauder-1} with $b = -6 \nabla \ttwo{Z} \in \bigcap_{\epsilon'>0} C_T \cC^{-\epsilon'}$ and $c=0$
and the assumption $u(0)=\phi^\sharp_0$, we have for $\delta \in [0,3/2-\epsilon) \setminus \{1/2-\epsilon\}$:
\[
	\norm{u_{1}}_{3/2-\epsilon-\delta} \lesssim_{\Z,\phi^\sharp_0} 1 + \norm{U_{1}(u_{1},u_{2})}_{-1/2-\epsilon-\delta}.
\]
Indeed, for $\delta \in [0,1/2-\epsilon)$ the claim follows from the first version of \prettyref{eq:Schauder-1} (with $\alpha = 1/2+\epsilon+\delta < 1$), while for $\delta \in (1/2-\epsilon, 3/2-\epsilon)$ we apply the second version of \prettyref{eq:Schauder-1} (with $\alpha = 1/2+\epsilon+\delta \in  (1, 2)$).

 By the first estimate of \prettyref{lem:U1-estimates}, we have 
for $\delta \ge 0$
\[
\norm{U_{1}(u_{1},u_{2})}_{-1/2-\epsilon-\delta}\lesssim_\Z \left(1+2^{-n\delta}\norm{u_{1}+u_{2}}_{0}\right)^{2}
\]
Choosing $\delta=3/2-2\epsilon$, we see that the $L^\infty$ bound for $u_1$ (\prettyref{eq:u1-bound} with $\alpha = 0$) follows
by our choice of $n\geq1$. Now that we know that $\norm{u_1}_0 \lesssim_{\Z, \phi^\sharp_0} 1$, we apply the above estimates once more, this time with $\delta = 0$, to obtain $\norm{u_1}_{3/2-\epsilon} \lesssim_{\Z, \phi^\sharp_0} 1 + \norm{u_2}_0^2$, i.e. \prettyref{eq:u1-bound} with $\alpha = 3/2-\epsilon$. The general version of \prettyref{eq:u1-bound} then follows from the interpolation estimate \prettyref{eq:interpolation}.

Now for the estimates for $u_2$. By the maximum principle \prettyref{eq:max-prince}
and the assumption $u_{2}(0)=0$, 
\[
\norm{u_{2}}_{0}\lesssim_{\Z}\norm{U_{2}(u_{1},u_{2})}_{0}^{1/3}.
\]
By the third estimate from \prettyref{lem:U1-estimates} and the bounds
on $u_{1}$ that we just derived, we have that for any $\kappa>1/2$
and $\epsilon'>0$,
\begin{align*}
\norm{u_{2}}_{0} & \lesssim_{\Z,\phi^\sharp_0} 1+\norm{u_{2}}_{0}^{1/3}+\norm{u_{2}}_{0}^{2/3} + \norm{u_1}_{1+\epsilon'}^{1/3} +\norm{u_{1}+u_{2}}_{1/2+\epsilon'}^{1/3}\left(1+\norm{u_{1}+u_{2}}_{0}^{1/3}\right)\\
 & \qquad+2^{n\kappa/3}\norm{u_{1}+u_{2}}_{0}^{1/3}+2^{2n\kappa/3}\norm{u_{1}+u_{2}}_{0}^{2/3}\\
 & \lesssim_{\Z, \phi^\sharp_0} 1+\norm{u_{2}}_{0}^{1/3}+\norm{u_{2}}_{0}^{2/3} + \norm{u_2}_{0}^{\frac{2(1+\epsilon')}{3(3/2-\epsilon)}} + (\norm{u_{2}}_{1/2+\epsilon'} + \norm{u_2}_0^{\frac{2(1/2+\epsilon')}{3/2-\epsilon}})^{1/3}\left(1+\norm{u_{2}}_{0}^{1/3}\right)\\
 & \qquad+2^{n\kappa/3}\norm{u_{2}}_{0}^{1/3}+2^{2n\kappa/3}\norm{u_{2}}_{0}^{2/3}.
\end{align*}
By Young's inequality for products (with weights), we may subsume the powers of $\norm{u_{2}}_0$
on the left hand side.
Combining this with our assumption on $n$ yields
\begin{align*}
\norm{u_{2}}_{0} & \lesssim_{\Z,\phi^\sharp_0} 1+\norm{u_{2}}_{1/2+\epsilon'}^{1/3} \left(1+\norm{u_{2}}_{0}^{1/3}\right) +2^{2n\kappa/3}\norm{u_{2}}_{0}^{2/3}\\
& \lesssim_{\Z,\phi^\sharp_0} 1+\norm{u_{2}}_{1/2+\epsilon'}^{1/3} \left(1+\norm{u_{2}}_{0}^{1/3}\right) + \norm{u_{2}}_{0}^{\frac{2\kappa}{3(3/2-2\epsilon)}+\frac{2}{3}},
\end{align*}
for any $\kappa > 1/2$. If we take $\kappa \in (1/2, (3/2-2\epsilon)/2)$, we may again
apply Young's inequality to subsume the remaining factors of $\norm{u_{2}}_{0}$
on the left hand side to obtain
\[
\norm{u_{2}}_{0}\lesssim_{\Z}1+\norm{u_{2}}_{1/2+\epsilon'}^{1/2},
\]
which is \prettyref{eq:u2-infinity-bound}.

It remains to prove \prettyref{eq:u2-3/2-bound}. We begin as in item 1. By the
Schauder estimate~\eqref{eq:Schauder-2} for $\cL+6\nabla\ttwo{Z}\cdot\nabla$ and the assumption $u_{2}(0)=0$,
\begin{align}
\norm{u_{2}}_{3/2-\epsilon} & \lesssim_{\Z}\norm{-e^{-6\ttwo{Z}}u_{2}^{3}+U_{2}}_{-1/2-\epsilon}
  \lesssim_{\Z} \norm{u_{2}}_{0}^{3}+\norm{U_{2}}_{-1/2-\epsilon}.\label{eq:u_2-bound-lemma}
\end{align}
Recall by \prettyref{lem:U1-estimates}
and \prettyref{eq:u1-bound}, we have that 
\begin{equation}\label{eq:U_2-bound-lemma}
\begin{aligned}
\norm{U_{2}}_{-1/2+\delta}&\lesssim_{\Z, \phi^\sharp_0} 1+\norm{u_{2}}_{0}^{3} + \norm{u_2}_0^{\frac{2(1+\epsilon')}{3/2-\epsilon}} +
(\norm{u_2}_0^{\frac{2(1/2+\epsilon')}{3/2-\epsilon}}+\norm{u_{2}}_{1/2+\epsilon'})\left(1+\norm{u_{2}}_{0}\right)\\
&\lesssim_{\Z,\phi^\sharp_0} 1+\norm{u_{2}}_{0}^{3}+\norm{u_{2}}_{1/2+\epsilon'}^{3/2}.
\end{aligned}
\end{equation}
Now we apply again the interpolation estimate \prettyref{eq:interpolation} and obtain
\begin{align*}
\norm{u_{2}}_{1/2+\epsilon'}^{3/2} & \lesssim\left(\norm{u_{2}}_{3/2-\epsilon}^{\frac{1/2+\epsilon'}{3/2-\epsilon}}\norm{u_{2}}_{0}^{1-\frac{1/2+\epsilon'}{3/2-\epsilon}}\right)^{3/2} = \norm{u_{2}}_{3/2-\epsilon}^{\frac{1}{2}+\epsilon''}\cdot\norm{u_{2}}_{0}^{1-\epsilon''},
\end{align*}
for $\epsilon'' = \epsilon' + \epsilon \frac{1/2+\epsilon'}{3/2-\epsilon}$. Combining this with
\prettyref{eq:u_2-bound-lemma}-\prettyref{eq:U_2-bound-lemma}, yields
upon applying Young's inequality and re-arranging again, 
\[
\norm{u_{2}}_{3/2-\epsilon}\lesssim_{\Z,\phi^\sharp_0} 1+\norm{u_{2}}_{0}^{3}
\]
 as desired.
\end{proof}

\subsection{Proof of \prettyref{thm:global-existence}}

By \prettyref{cor:u_1-longtime-u-2-bound}, we have $\norm{u_{1}}_{3/2-\epsilon}\lesssim_{\Z,\phi^\sharp_0} 1 + \norm{u_2}_0^2$.
It remains to show that $\norm{u_2}_{3/2-\epsilon} \lesssim_{\Z,\phi^\sharp_0} 1$. By the interpolation
estimate \prettyref{eq:interpolation} 
\[
\norm{u_{2}}_{1/2+\epsilon'}\lesssim\norm{u_{2}}_{3/2-\epsilon}^{\frac13+\frac23 \epsilon''}\norm{u_{2}}_{0}^{\frac{2}{3}-\frac23\epsilon''},
\]
for $\epsilon'' = \epsilon' + \epsilon \frac{1/2+\epsilon'}{3/2-\epsilon}$ as in the proof of \prettyref{cor:u_1-longtime-u-2-bound}. Plugging this into \eqref{eq:u2-infinity-bound}, we see that for $\gamma = \frac{\epsilon''}{3}$, 
\begin{align*}
\norm{u_{2}}_{0} & \lesssim_{\Z, \phi^\sharp_0} 1+\norm{u_{2}}_{3/2-\epsilon}^{\frac16 +\gamma}\norm{u_{2}}_0^{\frac13-\gamma}\\
 & \lesssim 1+\norm{u_{2}}_{3/2-\epsilon}^{\frac14 + \frac32 \gamma}+\norm{u_{2}}_{0}^{1-3\gamma}\\
 & \lesssim_{\Z,\phi^\sharp_0} 1+\norm{u_{2}}_{0}^{\frac34 + \frac92 \gamma}+\norm{u_{2}}_{0}^{1-3\gamma},
\end{align*}
where in the last inequality, we
applied \eqref{eq:u2-3/2-bound}. As these exponents are less than
1 for $\epsilon, \epsilon'$ (and thus $\gamma$) sufficiently small, Young's inequality yields 
\[
\norm{u_{2}}_{0}\lesssim_{\Z,\phi^\sharp_0} 1.
\]
Applying this to \prettyref{eq:u1-bound} and \eqref{eq:u2-3/2-bound} then yields the desired
bound on $u_1$ and $u_2$.

To complete the proof, we still have to justify that $T^\ast = \infty$. But since we know that $\norm{u}_{C_T \cC^{3/2-\epsilon}} \le C$ for any $T< T^\ast \wedge 1$, we must have $T^\ast > 1$. Then we can repeat the same argument on the interval $[1,2]$ to see that in fact $T^\ast >2$, and so on. 
\qed

\subsection{Proof of Schauder estimates and maximum principle}

\begin{proof}[\emph{\textbf{Proof of \prettyref{lem:schauder}}}]
  We first show \prettyref{eq:Schauder-1} for $b \in C_T\cC^{-\alpha}$ and $\alpha <1$, and then indicate how to adapt the argument to obtain \prettyref{eq:Schauder-1} for $b \in C_T\cC^{-\alpha+1}$ and $\alpha<2$, as well as \prettyref{eq:Schauder-2}. By the Schauder estimates for $\cL$, we have for $\tau \in [0, T]$, for $\epsilon \in [0, 2]$, and for $\epsilon' \ge 0$ small enough so that $2-\alpha-\epsilon' > 1$ (and thus $\norm{\nabla f}_{C_\tau L^\infty} \lesssim \norm{f}_{C_\tau \cC^{2-\alpha-\epsilon'}}$):
  \begin{align} \label{eq:schauder-pr}\nonumber
    \norm{f}_{C_{\tau} \mathcal{C}^{2 - \alpha - \epsilon}} & \lesssim
    \tau^{\epsilon / 2} \norm{\mathcal{L}f}_{C_{\tau}
    \mathcal{C}^{- \alpha}} + \norm{f (0)}_{\mathcal{C}^{2 - \alpha -
    \epsilon}}\\ \nonumber
    & \lesssim  \tau^{\epsilon / 2} \norm{(\mathcal{L}- b \arap \nabla -
    c\, \arap) f}_{C_{\tau} \mathcal{C}^{- \alpha}} + \tau^{\epsilon / 2}
    \norm{ b \arap \nabla f }_{C_{\tau} \mathcal{C}^{- \alpha}}\\ \nonumber
    & \quad + \tau^{\epsilon / 2} \norm{ c\, \arap f }_{C_{\tau} \mathcal{C}^{-
    \alpha}} + \norm{ f (0) }_{\mathcal{C}^{2 - \alpha - \epsilon}}\\ \nonumber
    & \lesssim \tau^{\epsilon / 2} \norm{ (\mathcal{L}- b \arap \nabla - c\,
    \arap) f }_{C_{\tau} \mathcal{C}^{- \alpha}} + \tau^{\epsilon / 2} (\norm{
    b }_{C_T \mathcal{C}^{- \alpha}} + \norm{ c }_{C_T \mathcal{C}^{-
    \alpha}}) \norm{ f }_{C_{\tau} \cC^{2-\alpha-\epsilon'}}\\ 
    &\quad + \norm{ f (0) }_{\mathcal{C}^{2 - \alpha - \epsilon}}.
  \end{align}
  We choose $\epsilon = \epsilon' > 0$ and $\tau > 0$ small enough so that with the implicit constant $C$ on the right hand side
  $C \tau^{\epsilon / 2} (\norm{ b }_{C_T \mathcal{C}^{- \alpha}} + \norm{ c
  }_{C_T \mathcal{C}^{- \alpha}}) \leqslant \frac{1}{2}$. In that way we obtain
  \[ \norm{ f }_{C_{\tau} \mathcal{C}^{2 - \alpha - \epsilon}} \lesssim
     \tau^{\epsilon / 2} \norm{ (\mathcal{L}- b \arap \nabla - c \,\arap) f
     }_{C_{\tau} \mathcal{C}^{- \alpha}} + \norm{ f (0) }_{\mathcal{C}^{2 -
     \alpha - \epsilon}} . \]
  As $\tau$ can be chosen independently of $f (0)$ and as $\norm{ f (\tau)
  }_{\mathcal{C}^{2 - \alpha - \epsilon}} \lesssim \norm{ f }_{C_{\tau}
  \mathcal{C}^{2 - \alpha - \epsilon}}$, we can now iterate this on $[\tau,
  2 \tau]$, $[2 \tau, 3 \tau]$, \dots, and obtain
  \[ \norm{ f }_{C_{\tau} \mathcal{C}^{2 - \alpha - \epsilon}} \lesssim \norm{
     (\mathcal{L}- b \arap \nabla - c \arap) f }_{C_T \mathcal{C}^{- \alpha}}
     + \norm{ f (0) }_{\mathcal{C}^{2 - \alpha}} . \]
  To obtain the bound for $\epsilon=0$ we simply apply~\eqref{eq:schauder-pr} once more, this time for $\tau = T$ and
  $\epsilon = 0$ and with the same $\epsilon'>0$ as before.
  
  If $b \in C_T \cC^{-\alpha+1}$ for $\alpha < 2$, then we only use the $C_\tau L^\infty$ regularity of $f$ and obtain $\norm{b\arap\nabla f}_{C_\tau \cC^{-\alpha}} \lesssim \norm{b}_{C_\tau \cC^{-\alpha+1}} \norm{f}_{C_\tau L^\infty}$. Since $2-\alpha > 0$ and thus $C_\tau \cC^{2-\alpha-\epsilon} \subset C_\tau L^\infty$ for all sufficiently small $\epsilon >0$, we can then use the same argument as before.
  
  Nearly the same arguments also work for \prettyref{eq:Schauder-2}, except that now we have to make sure that the products $b\cdot \nabla f$ and $c \nabla f$ are well-defined. This is the case if $-\alpha + 2-\alpha - 1 > 0$, i.e., if $\alpha < 1/2$.
\end{proof}
\begin{proof}[\emph{\textbf{Proof of \prettyref{lem:maximum-principle}}}]
 We assume first that $b,\Xi,g,f(0,\cdot)$ are smooth functions,
so that $f\in C^{1,2}$ is a classical solution, and we write

\[
(\partial_{t}-\Delta-b\cdot\nabla)f=-e^{\Xi}f^{3}+g.
\]
By compactness, $f$ takes its minimum and maximum on $[0,T]\times \bT^3$. Let $f(t,x)$ be a local maximum. If $t=0$, then $|f(t,x)|\leqslant\|f(0,\cdot)\|_{L^{\infty}}$.
If $t \in (0,T)$, then $\partial_{t}f(t,x)=\partial_{i}f(t,x)=0$, while
$\Delta f(t,x)\leqslant0$. Therefore

\[
-e^{\Xi(t,x)}f^{3}(t,x)+g(t,x)=(\partial_{t}-\Delta-b\cdot\nabla)f(t,x)\geqslant0,
\]
and thus

\[
f(t,x)\leqslant\left(\frac{g(t,x)}{e^{\Xi(t,x)}}\right)^{1/3}\leqslant\frac{\|g\|_{C_{T}L^{\infty}}^{1/3}}{e^{-\|\Xi\|_{C_{T}L^{\infty}}/3}}.
\]
If $f$ attains its maximum in $(T,x)$, then we must have 
\[
\partial_{t}f(T,x)\geqslant0
\]
 because otherwise $f(t,x)>f(T,x)$ for some $t<T$. Furthermore, $x$ is 
a local maximum of $f(T,\cdot)$ so that we still have $\partial_{i}f(T,x)=0$
and $\Delta f(T,x)\leqslant0$. Thus, we have again $(\partial_{t}-\Delta-b\cdot\nabla)f(t,x)\geqslant0$
and we get the same estimate as before.

To control the minimum, it suffices to note that minima of $f$ are
maxima of $-f$ and that $(\partial_{t}-\Delta-b\cdot\nabla)(-f)=-e^{\Xi}(-f)^{3}-g$.

Our estimate does not depend on $b$ at all, and it only depends on
$\Xi$ through its $L^{\infty}$ norm. For $b\in C_{T}\mathcal{C}^{-\alpha}$
with $\alpha<1/2$ it follows from our Schauder estimates that if
we approximate $b_{n}\rightarrow b$ in $C_{T}\mathcal{C}^{-\alpha-}$
and $\Xi_{n},g_{n}\rightarrow\Xi$ in $C_{T}L^{\infty}$ and $f_{n}(0,\cdot)\rightarrow f(0,\cdot)$
in $L^{\infty}$ with $b_{n},\Xi_{n},g_{n},f_{n}(0,\cdot)$ smooth,
then the solutions $f_{n}$ to

\[
(\partial_{t}-\Delta-b_{n}\cdot\nabla)f_{n}=-e^{\Xi_{n}}f^{3}+g_{n},\qquad f_{n}(0)=f_{n}(0,\cdot),
\]
converge locally uniformly to $f$, and thus the estimate also holds
for $f$.
\end{proof}

\section{Possible extensions}\label{sec:extensions}

In recent years there have been many works dedicated to the $\Phi^4_3$ equation, and by now we know much more than global existence and uniqueness of solutions on the torus. We expect that with the multiplicative transform some proofs of more refined properties can be simplified. To keep the paper short and accessible we do not attempt this here, but let us sketch some possible extensions of our results:

\begin{itemize}
	\item \textbf{Initial condition:} We could easily extend the local solution theory to $\phi_0^\sharp \in \cC^{-\alpha}$ by working in spaces $C([0,T], \cC^{-\alpha}) \cap \mathcal M^\gamma_T \cC^{1+\epsilon'}$ with $\norm{u}_{\mathcal M^{\gamma}_T \cC^{1+\epsilon'}} = \sup_{t \in [0,T]} t^\gamma \norm{u(t)}_{\cC^{1+\epsilon'}}$, for $\gamma = \frac{1+\epsilon'+\alpha}{2}$, see \cite[Section 6]{Gubinelli2017KPZ} for details. To control the nonlinearity $u^3$ while keeping the singularity integrable in time we need $\alpha<2/3$. Since $Z(0) \in \cC^{-1/2-\epsilon}$, we can thus take any element of $\cC^{-2/3+\epsilon}$ as initial condition and the special form $\phi_0 = Z(0)+\phi^\sharp_0$ is not necessary.

	 \item \textbf{More singular noise:} For applications in quantum field theory we are only interested in the invariant measure on $\bT^3$ (or on $\R^3$ if we consider the equation on the whole space). Therefore, the physically meaningful dimension would be $1+3=4$, i.e. the $\Phi^4_4$ equation on $\R_+ \times \bT^4$ resp. $\R_+ \times \R^4$. (Here the lower index in $\Phi^k_d$ indicates the space dimension in the equation, the upper index $k$ means that the leading order nonlinearity is $\phi^{k-1}$. Recall that in the stochastic quantization approach \cite{Parisi1981}, the ``stochastic time", $\R_+$, is an extra dimension and does not  correspond to``real'' time.) But in $d=4$ the space-time white noise is more singular and the $\Phi^4_4$ equation is \emph{critical} in the sense of Hairer \cite{Hairer2014}, which roughly means that the solution $u$ to our transformed equation is expected to have the same regularity as $\phi$. There are currently no mathematical tools for the $\Phi^4_4$ equation. But it is possible to treat a proxy for ``$4-\epsilon$ dimensions'', by considering a slightly regularized noise in $d=4$ \cite{Bruned2017Renormalising, Chandra2019}.
	 
	 	Our approach works if the regularity of $-6\nabla \ttwo{Z}$ is bigger than $-1/2$, and also bigger than $-1$ minus the minimum regularity of $Z_0, Z_1, Z_2$. For more singular noise it might be possible to further improve the regularity of $Z_0, Z_1$ by additional additive and multiplicative transformations. But $Z_2$ will never be more regular than $Z$. In ``$d=4-2\epsilon$'' we have $Z \in C_T \cC^{-1+\epsilon}$ and $\nabla \ttwo{Z} \in C_T \cC^{-1+2\epsilon}$. Therefore, we need $\epsilon > 1/3$. Beyond that it might be possible to additionally use a Zvonkin transform as in \cite{Zhang2020} to remove the term $-6\nabla \ttwo{Z} \cdot \nabla u$. But this becomes quite complicated, and for very singular noise it seems easier to use the arguments of \cite{Chandra2019}.
	 
	 \item \textbf{Coming down from infinity:} By adapting Lemma~2.7 in \cite{Moinat2020} we could derive an alternative version of the maximum principle:
	 	\begin{equation}
			\norm{f(t)}_{L^\infty}\lesssim_{b,\Xi} \max\left\{t^{-1/2}, \norm{\left(\cL-b\cdot\nabla f\right)f+e^{\Xi}f^{3}}_{0}^{1/3}\right\},
		\end{equation}
		where the right hand side is independent of $f(0)$. With this it should be possible to control $u(1)$ independently of the initial condition. As in \cite{Mourrat2017Dynamic} this should lead to a proof of existence of stationary measures for $\phi$ via the Krylov-Bogoliubov method.
		
	\item \textbf{Existence of moments:} The constant $C$ in Theorem~\ref{thm:global-existence} depends on $e^{3\ttwo{Z}}$ and on $e^{-6\ttwo{Z}}$. Since $\ttwo{Z}$ is a second order polynomial of a Gaussian process, it is therefore not even clear if $\E[\norm{u}_{C_{1}\cC^{3/2-\epsilon}}] < \infty$, nor if after transforming back we could get $\E[\norm{\phi}_{C_1\cC^{-1/2-\epsilon}}]<\infty$. While it is actually known that $\phi$ has stretched exponential moments \cite{Moinat2020} and, at least at stationarity, even $\E[\exp(\lambda \langle \phi(t), \eta \rangle^4)]<\infty$ for suitable test functions $\eta$ and sufficiently small $\lambda>0$ \cite{Hairer2021}. To try recovering such results with our approach, we would have to slightly change the multiplicative transformation and multiply with $e^{3 \Delta_{>m} \ttwo{Z}}$ instead, where $m$ is large enough so that $\norm{\Delta_{>m} \ttwo{Z}}_{C_T \cC^\alpha} \simeq 1$ for a suitable $\alpha < 1-\epsilon$.
		
	 \item \textbf{Extension to the whole space:} To solve the equation on $\R_+ \times \R^3$ we would have to use weighted spaces. Then we have to be careful when implementing the exponential transform, because $e^{3 \ttwo{Z}(x)}$ grows too fast as $|x| \to \infty$. We should therefore apply \cite[Lemma 2.4]{Gubinelli2019Global} to decompose $\ttwo{Z} = \ttwo{Z}_> + \ttwo{Z}_<$, where $\ttwo{Z}_> \in C_T \cC^{1-\epsilon}$ (without weight) and $\ttwo{Z}_<$ is in a weighted space but has regularity better than $1$. And then we use the multiplicative transform with $e^{3 \ttwo{Z}_>}$. After that we expect that the construction of solutions on $\R_+ \times \R^3$ can be handled similarly as for the $\Phi^4_2$ equation on $\R_+ \times \R^2$ in \cite{Gubinelli2019Global}.
\end{itemize}

\appendix

\section{Some nonlinear functions of the noise}

Let $\mathcal{T} = (Z, \llbracket Z^2 \rrbracket, \tthree{Z}, \ttwo{Z}, \tthree{Z}\reso Z, \ttwo{Z}\reso \llbracket Z^2 \rrbracket - \frac b3, |\nabla \ttwo{Z}|^2 - \frac b3,\tthree{Z} \reso \llbracket Z^2 \rrbracket - b Z)$, with the regularities given in \prettyref{sec:local}.
In the preceding, we used that the trees $\mathcal{T}$ are well-defined after renormalization and that $Y$ and a certain product involving its derivative
are continuous functions of these trees and, in particular, are well-behaved under renormalization. For the convenience of the reader, we collect these results here.

\begin{lem}
	If $\xi$ is a white noise on $\R \times \bT^3$, then all terms in $\mathcal{T}$ are well-defined as (sums of) iterated stochastic integrals, and they have the regularities given in Table~\ref{t:reg}.
\end{lem}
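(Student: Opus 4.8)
The plan is to realize every element of $\mathcal{T}$ as a finite sum of iterated Wiener--It\^o integrals against $\xi$, and then to read off its Besov regularity from a single second-moment estimate, exploiting the equivalence of moments on a fixed Wiener chaos. First one writes everything in Duhamel form: let $K$ be the kernel of $\cL^{-1}$ on $\R\times\bT^3$, so that $Z(t)=\int K(t-s,\cdot-y)\,\xi(ds,dy)$ is a first-chaos (Gaussian) field, and $\ttwo{Z}=\cL^{-1}\llbracket Z^2\rrbracket$, $\tthree{Z}=\cL^{-1}\llbracket Z^3\rrbracket$ (with vanishing initial data) are obtained by convolving the Wick powers $\llbracket Z^2\rrbracket=Z^2-a$, $\llbracket Z^3\rrbracket=Z^3-3aZ$ with $K$, where $a=a_\delta=\E[Z_\delta(0)^2]$ is the Wick constant. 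Then each of $\tthree{Z}\reso Z$, $\ttwo{Z}\reso\llbracket Z^2\rrbracket$, $|\nabla\ttwo{Z}|^2$ and $\tthree{Z}\reso\llbracket Z^2\rrbracket$ is a polynomial of degree at most $4$ in the underlying Gaussian field, and the product formula for multiple Wiener integrals produces its chaos decomposition: the top chaos is the fully ``off-diagonal'' term, and the lower chaoses arise by contracting one or two pairs of noise variables. The role of the counterterm $b=b_\delta$ (and of the linear term $bZ$) is that it cancels exactly the divergent zeroth- and first-chaos parts of these contractions; this is checked by a direct kernel computation and is where the explicit choices of $a_\delta,b_\delta$ enter. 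One concludes that all the objects depend on $\xi$ through finitely many fixed chaoses, with a structure that is uniform in $\delta$.

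Next one reduces the regularity statement to kernel estimates. By Gaussian hypercontractivity all $L^p(\prob)$ norms of a fixed-chaos random variable are comparable, so it suffices to bound second moments. Writing $\Delta_j\tau(t,x)$ as an iterated integral, the Wiener--It\^o isometry and orthogonality of chaoses express $\E|\Delta_j\tau(t,x)|^2$ as a deterministic integral of a symmetrized product of Littlewood--Paley-localized kernels built from $K$. Using the parabolic scaling $K(\lambda^2 t,\lambda x)=\lambda^{-3}K(t,x)$ together with routine convolution bounds, one obtains for each $\tau$, every $\kappa>0$ and uniformly in $\delta$, a bound $\sup_{t,x}\E|\Delta_j\tau(t,x)|^2\lesssim 2^{-2j(r_\tau-\kappa)}$, where $r_\tau$ is the regularity in Table~\ref{t:reg} read with $\epsilon\to 0$, as well as a time-increment bound $\sup_x\E|\Delta_j(\tau(t)-\tau(s))(x)|^2\lesssim 2^{-2j(r_\tau-\kappa)}\wedge\bigl(2^{2j\theta}|t-s|^{\theta}\bigr)$ for a small $\theta>0$. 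Feeding the spatial bound into $\E\|\Delta_j\tau(t)\|_{L^p}^p\lesssim_p\int(\E|\Delta_j\tau(t,x)|^2)^{p/2}\,dx$, summing in $j$, and using the Besov embedding $B^{s}_{p,p}(\bT^3)\hookrightarrow\cC^{s-3/p}$ with $p\to\infty$, one gets $\tau(t)\in\cC^{r_\tau-\epsilon}$ a.s.\ for every $\epsilon>0$, with moments of all orders; the time-increment bound then upgrades this to $\tau\in C_T\cC^{r_\tau-\epsilon}$ by Kolmogorov's continuity criterion.

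Running the same second-moment estimates on the difference $\tau_\delta-\tau_{\delta'}$ shows that $(\tau_\delta)_\delta$ is Cauchy in $L^p(\prob;C_T\cC^{r_\tau-\epsilon})$, hence convergent, and comparing two mollifications through a common third one shows the limit does not depend on the mollification. This is precisely the content available in \cite{Catellier2018} and, in expository form, in \cite{Mourrat2017Construction}, which is why in the paper we simply invoke those references rather than reproduce the computation.

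The step I expect to be the main obstacle is the bookkeeping for the borderline resonant products $\tthree{Z}\reso Z$, $\ttwo{Z}\reso\llbracket Z^2\rrbracket$, $|\nabla\ttwo{Z}|^2$ and $\tthree{Z}\reso\llbracket Z^2\rrbracket$, for which the sum of the regularities of the two factors is exactly $0$: there the deterministic paraproduct estimates give nothing for the resonant part and one must extract a genuine probabilistic gain. Concretely, one must show that after subtracting $b/3$ (resp.\ $b$, together with the linear term $bZ$) the remaining kernel integrals are at worst logarithmically divergent uniformly in $\delta$, which is what gets absorbed into the arbitrarily small loss of $\kappa$ derivatives. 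Organizing these contraction estimates cleanly---for instance via a graphical notation for the associated diagrams---is the technical heart of the matter, and it is exactly the part automated by the general renormalization machinery or supplied in detail in the references above.
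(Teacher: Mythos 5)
Your sketch correctly outlines the chaos-expansion machinery that \cite{Catellier2018, Mourrat2017Construction} use to construct the stochastic objects, and most of $\mathcal T$ is indeed covered there. But the actual content of the paper's proof of this lemma is devoted to the single term $|\nabla\ttwo{Z}|^2 - \frac{b}{3}$, which does \emph{not} appear in those references, and your sketch treats it as if it did: your closing paragraph claims the contraction estimates for the borderline resonant products are ``supplied in detail in the references above,'' but for this one object they are not. Your proposal therefore has no argument for precisely the term that needs one, and correspondingly it fails to register that the paper's proof is not a bare citation.

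The paper handles $|\nabla\ttwo{Z}|^2 - \frac{b}{3}$ not by redoing a kernel computation but by an algebraic reduction. The paraproduct parts of $\sum_i \partial_i\ttwo{Z}\,\partial_i\ttwo{Z}$ are always well-defined, so only the renormalized resonant part $\sum_i\partial_i\ttwo{Z}\reso\partial_i\ttwo{Z} - \frac{b}{3}$ is in question. Applying Leibniz's rule for $\Delta$ to $\Delta(\ttwo{Z}\reso\ttwo{Z})$ (the latter has positive regularity, so this is a legitimate space-time distribution) rewrites this, modulo manifestly well-defined terms, as $\ttwo{Z}\reso\Delta\ttwo{Z}-\frac{b}{3}$. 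That object differs from the already-constructed $\ttwo{Z}\reso\llbracket Z^2\rrbracket - \frac{b}{3}$ only in that $\llbracket Z^2\rrbracket(t)$ is replaced by $\int_0^t\Delta P_{t-s}\,\llbracket Z^2\rrbracket(s)\,\mathrm{d}s$, and the kernel $\int_0^t \Delta P_{t-s}\,\mathrm{d}s$ neither gains nor loses regularity, so the estimates from \cite[Section 4.5]{Catellier2018} and \cite[Section 4.2]{Mourrat2017Construction} transfer with essentially no new work. Your from-scratch chaos computation would also succeed as a longer alternative route, but then you must actually produce the second-moment kernel bounds for $|\nabla\ttwo{Z}|^2 - \frac{b}{3}$ rather than delegate them to a literature that does not contain them.
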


\begin{proof}
	This is shown in \cite{Catellier2018, Mourrat2017Construction}, except that there the term $|\nabla \ttwo{Z}|^2 - \frac b3$ does not appear. Note that, since the paraproduct is always well-defined, the existence of this term is equivalent to the existence of the renormalized resonant product $\sum_i \partial_i \ttwo{Z}\reso \partial_i \ttwo{Z} - \frac b3$. As a space-time distribution, this latter term is equivalent to $\ttwo Z \reso \llbracket Z^2 \rrbracket - \frac b3$: Indeed, by applying Leibniz's rule to $\cL (\ttwo{Z} \reso \ttwo{Z})$ we obtain
	\begin{align*}
		\sum_i \partial_i \ttwo{Z}\reso \partial_i \ttwo{Z} -\frac b3& =  \ttwo{Z} \reso \cL \ttwo{Z} -\frac b3 - \cL (\ttwo{Z} \reso \ttwo{Z}) - \ttwo{Z} \reso \ttwo{Z} \\
		& =  \ttwo{Z} \reso \llbracket Z^2 \rrbracket -\frac b3 - \cL (\ttwo{Z} \reso \ttwo{Z}) - \ttwo{Z} \reso \ttwo{Z}.
	\end{align*}
	Since $\ttwo{Z}$ has positive regularity, all the terms on the right hand side are well-defined. However, it is not clear if $\cL (\ttwo{Z} \reso \ttwo{Z})$ is in a $C_T \cC^\alpha$ space and we can only treat it as a space-time distribution.
	
	Therefore, we need to use stochastic arguments to construct $\sum_i \partial_i \ttwo{Z}\reso \partial_i \ttwo{Z} - \frac b3$. Using again Leibniz's rule, this time for $\Delta$, we see that constructing $\sum_i \partial_i \ttwo{Z}\reso \partial_i \ttwo{Z} - \frac b3 \in C_T \cC^{-\epsilon}$ is equivalent to constructing $\ttwo{Z}\reso \Delta \ttwo{Z} - \frac b3$. But this is nearly the same as $\ttwo{Z} \reso \llbracket Z^2 \rrbracket - \frac b3$, except that we replace $\llbracket Z^2 \rrbracket(t)$ by $\int_0^t \Delta P_{t-s} \llbracket Z^2 \rrbracket(s) \mathrm d s$. The kernel $\int_0^t \Delta P_{t-s}\mathrm d s$ neither gains nor loses regularity, and therefore (nearly) the same argument as in \cite[Section 4.5]{Catellier2018} or \cite[Section 4.2]{Mourrat2017Construction} show that $\ttwo{Z}\reso \Delta \ttwo{Z} - \frac b3 \in C_T \cC^{-\epsilon}$.
\end{proof}

To prove the desired continuity estimates for $Y$ and a certain product involving its derivative, we need some tools
from paracontrolled distributions. First we recall the paralinearization theorem (see \cite{Bahouri2011} or \cite[Lemma 2.6]{Gubinelli2015Paracontrolled}) which says that for $0<\alpha<1$, $F\in C^{2}$ and $f\in \cC^\alpha$ we have 
\begin{equation}\label{eq:paralinearization}
F(f)-F'(f)\para f \in \cC^{2\alpha}.
\end{equation}
Next, we recall Bony's paramultiplication bound \cite[Theorem 2.3]{Bony1981} which says that 
if $f,g\in \cC^\alpha$ with non-integer $\alpha>0$ and $h\in\cC^\beta$, then 
\begin{equation}\label{eq:paramultiplication}
\norm{f \para (g\para h) - (fg)\para h}_{\alpha+\beta} \lesssim \norm{f}_\alpha\norm{g}_\alpha \norm{h}_\beta.
\end{equation}
Next, let us recall the well-known fact that if we let $Jf(t) = \int_0^t P_{t-s} f ds $, then $J$ approximately commutes
with paraproducts in the sense that for all $\alpha \in (0,1)$, $\beta \in \R$ and $\delta < 2$
\begin{equation}\label{eq:j-commutation}
		\norm{J(f\para g) - f\para J g}_{C_T \cC^{\alpha+\beta+\delta}} \lesssim (\norm{f}_{C_T \cC^\alpha} + \norm{f}_{C^{\frac\alpha 2}_T L^\infty}) \norm{g}_{C_T \cC^\beta},
\end{equation}
where $\norm{f}_{C^{\frac\alpha 2}_T L^\infty} = \sup_{0\le s < t \le T} \frac{\norm{f(t) - f(s)}_{L^\infty}}{|t-s|^{\frac \alpha 2}}$.
(While to the best of our knowledge this exact bound does not appear in the literature, it is a folklore result; see, e.g.,  \cite[Lemma 5.5.7]{Perkowski2014} or \cite[Lemma A.1]{Kremp2020} for very similar estimates from which this follows.)
Finally, we recall the commutator estimate \cite[Lemma 2.4]{Gubinelli2015Paracontrolled}  which says that
\begin{equation}\label{eq:commutator}
C(f,g,h) = (f\para g)\reso h - f(g\reso h)
\end{equation}
is a bounded trilinear map from $\cC^\alpha \times \cC^\beta \times \cC^\gamma$ to $\cC^{\alpha+\beta+\gamma}$, provided that $\alpha \in (0,1)$ and $\beta + \gamma < 0$ while $\alpha+\beta+\gamma>0$. We now show that $Y$ and a certain product involving its derivative are continuous in $\mathcal T$. (And consequently, they are well-defined in the renormalization limit.)

\begin{lem}
	Let
	\[
		\cL Y = 3e^{3\ttwo{Z}}( \tthree{Z} \llbracket Z^2 \rrbracket - b(Z + \tthree{Z})),\quad Y(0) = 0.
	\]
	Then $Y \in \bigcap_{\epsilon>0} C_T\cC^{1-\epsilon}$ is a continuous function of $\mathcal{T}$. Moreover,
	\[
		\nabla \ttwo{Z} \cdot \nabla Y - b e^{3\ttwo{Z}} \tthree{Z} \in \bigcap_{\epsilon>0} C_T \cC^{-2\epsilon}
	\]
	is also a continuous function of $\mathcal T$.
\end{lem}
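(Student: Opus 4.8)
The plan is to show that, once the already-renormalized resonances $\tthree{Z}\reso\llbracket Z^2\rrbracket-bZ$, $\ttwo{Z}\reso\llbracket Z^2\rrbracket-\tfrac b3$ and $|\nabla\ttwo{Z}|^2-\tfrac b3$ (all built from $\mathcal{T}$) are inserted, every bare occurrence of the diverging constant $b$ cancels, and then to read off regularities from the Schauder estimate. Throughout, $\delta>0$, so all products below are classical; $e^{3\ttwo{Z}}\in C_T\cC^{1-\epsilon}$ and is a continuous function of $\ttwo{Z}$.

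First I would rewrite the source term of the $Y$-equation. Setting $W:=\tthree{Z}\llbracket Z^2\rrbracket-bZ=\tthree{Z}\para\llbracket Z^2\rrbracket+\llbracket Z^2\rrbracket\para\tthree{Z}+(\tthree{Z}\reso\llbracket Z^2\rrbracket-bZ)$, which is a continuous function of $\mathcal{T}$ lying in $C_T\cC^{-1-\epsilon}$, the identity $\tthree{Z}\llbracket Z^2\rrbracket=W+bZ$ yields
\[
\cL Y=3e^{3\ttwo{Z}}\bigl(\tthree{Z}\llbracket Z^2\rrbracket-b(Z+\tthree{Z})\bigr)=3\bigl(e^{3\ttwo{Z}}W-be^{3\ttwo{Z}}\tthree{Z}\bigr),
\]
so the $bZ$-divergence has cancelled. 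Now decompose $e^{3\ttwo{Z}}W=e^{3\ttwo{Z}}\para W+W\para e^{3\ttwo{Z}}+e^{3\ttwo{Z}}\reso W$: the first two terms are continuous functions of $\mathcal{T}$ in $C_T\cC^{-1-\epsilon}$ and $C_T\cC^{-2\epsilon}$ by the paraproduct estimates and $e^{3\ttwo{Z}}\in C_T L^\infty$. In $e^{3\ttwo{Z}}\reso W$ the only term failing power counting is $e^{3\ttwo{Z}}\reso(\tthree{Z}\para\llbracket Z^2\rrbracket)$; here I would apply the commutator estimate \eqref{eq:commutator} with $(f,g,h)=(\tthree{Z},\llbracket Z^2\rrbracket,e^{3\ttwo{Z}})$, paralinearize $e^{3\ttwo{Z}}=3e^{3\ttwo{Z}}\para\ttwo{Z}+R$ with $R\in C_T\cC^{2-2\epsilon}$ via \eqref{eq:paralinearization}, apply \eqref{eq:commutator} again with $(3e^{3\ttwo{Z}},\ttwo{Z},\llbracket Z^2\rrbracket)$, and finally substitute $\ttwo{Z}\reso\llbracket Z^2\rrbracket=(\ttwo{Z}\reso\llbracket Z^2\rrbracket-\tfrac b3)+\tfrac b3$. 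The commutators and remainders land in $C_T\cC^{-\epsilon}$ or better and are continuous in $\mathcal{T}$, while the $\tfrac b3$ produces exactly $be^{3\ttwo{Z}}\tthree{Z}$, cancelling the $-be^{3\ttwo{Z}}\tthree{Z}$ above. Hence $\cL Y$ is a continuous function of $\mathcal{T}$ valued in $C_T\cC^{-1-\epsilon}$, and since $Y(0)=0$ the Schauder estimate gives $Y=J(\cL Y)\in\bigcap_{\epsilon>0}C_T\cC^{1-\epsilon}$, continuous in $\mathcal{T}$.

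For the gradient term I first extract the paracontrolled structure of $Y$. Tracking the $C_T\cC^{-1-\epsilon}$ part above and using Bony's paramultiplication \eqref{eq:paramultiplication} to rewrite $e^{3\ttwo{Z}}\para(\tthree{Z}\para\llbracket Z^2\rrbracket)=(e^{3\ttwo{Z}}\tthree{Z})\para\llbracket Z^2\rrbracket$ modulo $C_T\cC^{-1/2-2\epsilon}$, I get $\cL Y=3(e^{3\ttwo{Z}}\tthree{Z})\para\llbracket Z^2\rrbracket+\cL Y^\flat$ with $\cL Y^\flat\in C_T\cC^{-1/2-2\epsilon}$ continuous in $\mathcal{T}$. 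Applying $J$, the commutation estimate \eqref{eq:j-commutation}, and the identity $J\llbracket Z^2\rrbracket=\ttwo{Z}$ (valid since $\cL\ttwo{Z}=\llbracket Z^2\rrbracket$, $\ttwo{Z}(0)=0$), I obtain
\[
Y=3(e^{3\ttwo{Z}}\tthree{Z})\para\ttwo{Z}+Y^\sharp,\qquad Y^\sharp\in\bigcap_{\epsilon>0}C_T\cC^{3/2-5\epsilon},
\]
with $Y^\sharp$ continuous in $\mathcal{T}$; differentiating (the Leibniz rule holds exactly for paraproducts), $\nabla Y=3(e^{3\ttwo{Z}}\tthree{Z})\para\nabla\ttwo{Z}+\bigl(3[\nabla(e^{3\ttwo{Z}}\tthree{Z})]\para\ttwo{Z}+\nabla Y^\sharp\bigr)$, the last parenthesis being in $C_T\cC^{1/2-5\epsilon}$. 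In $\nabla\ttwo{Z}\cdot\nabla Y=\sum_i\bigl(\partial_i\ttwo{Z}\para\partial_i Y+\partial_i Y\para\partial_i\ttwo{Z}+\partial_i\ttwo{Z}\reso\partial_i Y\bigr)$ the two paraproducts are in $C_T\cC^{-2\epsilon}$; for the resonance I substitute the expansion of $\partial_i Y$, apply \eqref{eq:commutator} with $(3e^{3\ttwo{Z}}\tthree{Z},\partial_i\ttwo{Z},\partial_i\ttwo{Z})$, and insert $\sum_i\partial_i\ttwo{Z}\reso\partial_i\ttwo{Z}=(|\nabla\ttwo{Z}|^2-\tfrac b3)+\tfrac b3$. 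The $\tfrac b3$ again produces exactly $be^{3\ttwo{Z}}\tthree{Z}$, cancelling the subtracted counterterm, while $3(e^{3\ttwo{Z}}\tthree{Z})(|\nabla\ttwo{Z}|^2-\tfrac b3)$, the commutators, and $\partial_i\ttwo{Z}\reso(3[\nabla(e^{3\ttwo{Z}}\tthree{Z})]\para\ttwo{Z}+\nabla Y^\sharp)$ are in $C_T\cC^{-\epsilon}$ or better and continuous in $\mathcal{T}$. This gives $\nabla\ttwo{Z}\cdot\nabla Y-be^{3\ttwo{Z}}\tthree{Z}\in\bigcap_{\epsilon>0}C_T\cC^{-2\epsilon}$ as a continuous function of $\mathcal{T}$; continuity in $\mathcal{T}$ everywhere is automatic because every operation used (paraproducts, resonant products, $J$, pointwise products, $F\mapsto e^{3F}$, and the maps in \eqref{eq:paralinearization}--\eqref{eq:j-commutation}) is continuous on the relevant H\"older--Besov spaces.

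I expect the main obstacle to be the resonances $e^{3\ttwo{Z}}\reso(\tthree{Z}\para\llbracket Z^2\rrbracket)$ and $\nabla\ttwo{Z}\reso\nabla Y$: these are not admissible products by power counting, so they must be unwound via the commutator and paralinearization estimates until they are reduced to the renormalized objects $\ttwo{Z}\reso\llbracket Z^2\rrbracket-\tfrac b3$ and $|\nabla\ttwo{Z}|^2-\tfrac b3$, and the crux is the bookkeeping that the bare constant $b$ shed in this reduction is precisely the explicit $-be^{3\ttwo{Z}}\tthree{Z}$ counterterm. A minor point is that \eqref{eq:j-commutation} also requires the parabolic time-regularity $e^{3\ttwo{Z}}\tthree{Z}\in C^{\gamma/2}_T L^\infty$, which follows from $\ttwo{Z},\tthree{Z}$ solving parabolic equations with the stated right-hand sides, and that $\epsilon$ must be taken small enough (e.g.\ $\epsilon<1/12$) for the positive-regularity conditions in the resonant products to hold --- harmless, as the claims are for all $\epsilon>0$.
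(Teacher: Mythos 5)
Your proposal is correct and follows essentially the same route as the paper's proof: you decompose the ill-defined products, isolate the one resonance $e^{3\ttwo{Z}}\reso(\tthree{Z}\para\llbracket Z^2\rrbracket)$ that fails power counting, and unwind it via paralinearization~\eqref{eq:paralinearization} and the commutator~\eqref{eq:commutator} so that the bare $b$'s reduce to the renormalized trees and cancel the explicit counterterms; for the gradient bound you likewise extract the paracontrolled expansion $Y=3(e^{3\ttwo{Z}}\tthree{Z})\para\ttwo{Z}+Y^\sharp$ using Bony's bound~\eqref{eq:paramultiplication}, the $J$-commutation~\eqref{eq:j-commutation}, and $J\llbracket Z^2\rrbracket=\ttwo{Z}$, exactly as in the paper. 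The only cosmetic difference is that you first repackage $\tthree{Z}\llbracket Z^2\rrbracket-bZ$ as a well-defined object $W$ and then Bony-decompose $e^{3\ttwo{Z}}W$, whereas the paper splits $\tthree{Z}\llbracket Z^2\rrbracket-b(Z+\tthree{Z})$ directly into resonant, para-, and $\arap$ pieces before multiplying by $e^{3\ttwo{Z}}$; the analytical content is identical.
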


\begin{proof}
	We split up
	\[
		\tthree{Z} \llbracket Z^2 \rrbracket - b(Z + \tthree{Z}) = \left( \tthree{Z} \reso \llbracket Z^2 \rrbracket - b Z\right) + \left(\tthree{Z} \para \llbracket Z^2 \rrbracket - b \tthree{Z}\right) + \tthree{Z} \arap \llbracket Z^2 \rrbracket.
	\]
	By assumption, $\left( \tthree{Z} \reso \llbracket Z^2 \rrbracket - b Z\right) \in C_T \cC^{-\frac12 - \epsilon}$, and since $\ttwo{Z} \in C_T \cC^{1-\epsilon}$ and the regularity is preserved under the smooth function $\exp(\cdot)$, the product $3e^{3\ttwo{Z}}\left( \tthree{Z} \reso\llbracket Z^2 \rrbracket - bZ \right)$ is well-defined and in $C_T \cC^{-\frac12 -\epsilon}$. Similarly, $\tthree{Z} \arap \llbracket Z^2 \rrbracket \in C_T \cC^{-\frac12 - \epsilon}$ by the paraproduct estimates, and therefore also $3e^{3\ttwo{Z}}\tthree{Z} \arap \llbracket Z^2 \rrbracket \in C_T \cC^{-\frac12 - \epsilon}$. The most complicated term is the one involving $\tthree{Z} \para \llbracket Z^2 \rrbracket$. This paraproduct is of course well-defined, but its regularity is only $C_T \cC^{-1-\epsilon}$ and therefore the resonant product $3e^{3\ttwo{Z}} \reso \left(\tthree{Z} \para \llbracket Z^2 \rrbracket\right)$ is not defined.
	
	Thus we need to use the more refined tools from paracalculus described above to construct this product as a continuous function of $\mathcal T$. First note that by the paralinearization theorem \eqref{eq:paralinearization} we have $e^{3\ttwo{Z}} - e^{3\ttwo{Z}} \para 3\ttwo{Z} \in C_T \cC^{2-2\epsilon}$, and therefore this difference can be multiplied with $\tthree{Z} \para \llbracket Z^2 \rrbracket$. Then, by the commutator estimate \eqref{eq:commutator}, we have that
	\begin{align*}
		3e^{3\ttwo{Z}} \reso ( \tthree{Z} &\para \llbracket Z^2 \rrbracket)-3e^{3\ttwo{Z}}b\tthree{Z} \\ & = (3e^{3\ttwo{Z}} - 9 e^{3\ttwo{Z}}\para \ttwo{Z}) \reso (\tthree{Z} \para \llbracket Z^2 \rrbracket) + 9 C(e^{3\ttwo{Z}}, \ttwo{Z}, \tthree{Z} \para \llbracket Z^2 \rrbracket) \\
		&\qquad \qquad+ 9 e^{3\ttwo{Z}}( \ttwo{Z} \reso (\tthree{Z} \para \llbracket Z^2 \rrbracket)) - 3e^{3\ttwo{Z}} b \tthree{Z} \\
		& = C_T \cC^{1-3\epsilon} + 9 e^{3\ttwo{Z}} \left(C(\tthree{Z}, \llbracket Z^2 \rrbracket, \ttwo{Z}) + \tthree{Z}\left((\llbracket Z^2\rrbracket\reso \ttwo{Z}) - \frac{b}{3}\right)\right),
	\end{align*}
	where on the right hand side we used the notation $C_T \cC^{1-3\epsilon}$ to denote a term of this regularity which is given as a continuous function of the data. The commutator on the right hand side has positive regularity, so it no longer poses a problem. The term $(\llbracket Z^2\rrbracket\reso \ttwo{Z}) - \frac{b}{3}$ is in $C_T \cC^{-\epsilon}$ by assumption, so we can multiply it with $\tthree{Z} \in C_T \cC^{\frac12-\epsilon}$ and the product is in $C_T \cC^{-\epsilon}$. Then we can multiply this product with $e^{3\ttwo{Z}} \in C_T \cC^{1-\epsilon}$, and therefore the remaining expression on the right hand side is well-defined and in $C_T \cC^{-\epsilon} \subset C_T \cC^{-\frac12-\epsilon}$.
	
	Next, we consider the renormalized product $\nabla \ttwo{Z} \cdot \nabla Y - b e^{3\ttwo{Z}} \tthree{Z}$. The idea is to write $Y$ as a paraproduct plus a more regular remainder, and to apply the commutator estimate for $C$ as above. By the previous considerations and the paraproduct estimates, $\cL Y - 3e^{3\ttwo{Z}}\para(\tthree{Z} \para \llbracket Z^2 \rrbracket) \in C_T \cC^{-\frac12-\epsilon}$. By Bony's paramultiplication bound \eqref{eq:paramultiplication}, we moreover have
	\[
		\norm{3e^{3\ttwo{Z}}\para(\tthree{Z} \para \llbracket Z^2 \rrbracket) - (3e^{3\ttwo{Z}}\tthree{Z}) \para \llbracket Z^2 \rrbracket}_{C_T \cC^{-3\epsilon}} \lesssim \norm{3e^{3\ttwo{Z}}}_{C_T \cC^{\frac12 - \epsilon}} \norm{\tthree{Z}}_{C_T \cC^{\frac12 - \epsilon}} \norm{\llbracket Z^2 \rrbracket}_{C_T \cC^{-1-\epsilon}},
	\]
	so that $\cL Y - (3e^{3\ttwo{Z}}\tthree{Z}) \para \llbracket Z^2 \rrbracket \in C_T \cC^{-\frac12-\epsilon}$. By the Schauder estimates for $\cL$ we deduce that $Y - J((3e^{3\ttwo{Z}}\tthree{Z}) \para \llbracket Z^2 \rrbracket) \in C_T \cC^{\frac32-\epsilon}$. 
		Therefore, the product $\nabla \ttwo{Z} \cdot \nabla (Y - J((3e^{3\ttwo{Z}}\tthree{Z}) \para \llbracket Z^2 \rrbracket)) \in C_T \cC^{-\epsilon}$ is well-defined. To deal with the term $\nabla \ttwo{Z} \cdot \nabla J((3e^{3\ttwo{Z}}\tthree{Z}) \para \llbracket Z^2 \rrbracket)$, we need to use the commutator estimate \eqref{eq:j-commutation}. To apply this,  however, we first need time regularity of $3e^{3\ttwo{Z}}\tthree{Z}$. For $\tthree{Z}$ it follows from the estimate (1.11) in \cite{Mourrat2017Construction} that $\norm{\tthree{Z}}_{C^{\frac14 - \frac \epsilon 2}_T L^\infty} < \infty$. For $\ttwo{Z}$ it follows from the Schauder estimates in \cite[Lemma 2.10]{Gubinelli2019Global} that $\norm{\ttwo{Z}}_{C^{\frac 12 - \frac\epsilon 2}_T L^\infty} \lesssim \norm{\llbracket Z^2 \rrbracket}_{C_T \cC^{-1-\epsilon}} < \infty$. Therefore, using that $J\llbracket Z^2 \rrbracket = \ttwo{Z}$, we can apply \eqref{eq:j-commutation} to get 
	\[
		J((3e^{3\ttwo{Z}}\tthree{Z}) \para \llbracket Z^2 \rrbracket) - (3e^{3\ttwo{Z}}\tthree{Z}) \para \ttwo{Z} \in C_T \cC^{\frac32-3\epsilon},
	\]
	and it remains to make sense of $\nabla [(3e^{3\ttwo{Z}}\tthree{Z}) \para \ttwo{Z}] \cdot \nabla \ttwo{Z}$. By Leibniz's rule and the paraproduct estimates we have $\nabla [(3e^{3\ttwo{Z}}\tthree{Z}) \para \ttwo{Z}] -  (3e^{3\ttwo{Z}}\tthree{Z}) \para \nabla \ttwo{Z} \in C_T \cC^{\frac12 -2\epsilon}$. Then we apply the commutator estimate for $C$ to reduce the last ill-defined product to
	\begin{align*}
		& \sum_i ((3e^{3\ttwo{Z}}\tthree{Z}) \para \partial_i \ttwo{Z})\reso \partial_i \ttwo{Z} - b e^{3\ttwo{Z}} \tthree{Z}\\
		&\qquad = \sum_i C(3e^{3\ttwo{Z}}\tthree{Z}, \partial_i \ttwo{Z}, \partial_i \ttwo{Z}) + 3e^{3\ttwo{Z}}\tthree{Z} (\sum_i (\partial_i \ttwo{Z} \reso \partial_i \ttwo{Z}) - \frac b 3).
	\end{align*}
	Now it suffices to note that $\sum_i (\partial_i \ttwo{Z} \reso \partial_i \ttwo{Z}) - \frac b 3 = |\nabla \ttwo{Z}|^2 - \frac b3 - 2\sum_i (\partial_i \ttwo{Z} \para \partial_i \ttwo{Z})$ and that the paraproducts are well-defined and in $C_T \cC^{-2\epsilon}$.
\end{proof}

\section*{Acknowledgements} 
The first author gratefully acknowledges support from NSERC, cette recherche a \'et\'e financ\'ee par CRSNG [RGPIN-2020-04597, DGECR-2020-00199]. The second author gratefully acknowledges financial support by the DFG via Research Unit FOR2402.

\bibliographystyle{amsalpha} 
\bibliography{all} 

\end{document}